   \def\sH{{\mathfrak H}}   
      \def\sL{{\mathfrak L}}
\def\sfi{{\text{\rm{\textsf i}}}}
      \def\dC{{\mathbb C}}
      \def\dR{{\mathbb R}}
      \def\cL{{\EuScript L}}
\def\bm\chi{\mbox{\boldmath$\chi$}}
\def\min{{\rm min\,}}
\def\ker{{\rm ker\,}}
\def\ran{{\rm ran\,}}
\def\cran{{\rm \overline{ran}\,}}
\def\dom{{\rm dom\,}}
\def\mul{{\rm mul\,}}
\def\cdom{{\rm \overline{dom}\,}}
\def\dim{{\rm dim\,}}
\let\xker=\ker \def\ker{{\xker\,}}
\def\cmr{{\dC \setminus \dR}}
\def\mul{{\text{\rm mul\,}}}
\def\senki{{\lbrack\negthinspace [\bot ]\negthinspace\rbrack}}
\def\senki+{{\lbrack\negthinspace [+] \negthinspace\rbrack}}
\DeclareMathOperator{\hoplus}{\, \widehat \oplus \,}
\newtheorem{theorem}{Theorem}[section]
\newtheorem{proposition}[theorem]{Proposition}
\newtheorem{corollary}[theorem]{Corollary}
\newtheorem{lemma}[theorem]{Lemma}
\theoremstyle{definition}
\newtheorem{definition}[theorem]{Definition}
\numberwithin{equation}{section}
\begin{document}

\title[Antitonicity for selfadjoint relations]
{Antitonicity of the inverse for selfadjoint matrices, operators,
and relations}

\author[J.~Behrndt]{Jussi~Behrndt}
\author[S.~Hassi]{Seppo~Hassi}
\author[H.S.V.~de Snoo]{Henk~de~Snoo}
\author[H.L.~Wietsma]{Rudi~Wietsma}

\address{Institut f\"ur Numerische Mathematik\\
Technische Universit\"at Graz \\
Steyrergasse 30\\
8010 Graz \\
Austria}
\email{behrndt@tugraz.at}

\address{Department of Mathematics and Statistics\\
University of Vaasa\\
P.O. Box 700, FI-65101 Vaasa\\
Finland}
\email{sha@uwasa.fi}

\address{Johann Bernoulli Institute for Mathematics and Computer Science\\
University of Groningen \\
P.O. Box 407, 9700 AK Groningen \\
Nederland} \email{desnoo@math.rug.nl}

\address{Department of Mathematics and Statistics\\
University of Vaasa\\
P.O. Box 700, FI-65101 Vaasa\\
Finland}
\email{rwietsma@uwasa.fi}


\subjclass{Primary 47A06, 47A63, 47B25; Secondary 15A09, 15A45,
15B57}

\keywords{Selfadjoint operator, selfadjoint relation, inertia,
matrix inequality, operator inequality, ordering}

\thanks{
This research was supported by the grants from the Academy of
Finland (project 139102) and the German Academic Exchange Service
(DAAD); PPP Finland project 50740090. The third author would like to thank the
Deutsche Forschungsgemeinschaft (DFG) for the Mercator visiting
professorship at the Technische Universit\"at Berlin.
}

\begin{abstract}
Let $H_1$ and $H_2$ be selfadjoint operators or relations
(multivalued operators) acting on a separable Hilbert space and
assume that the inequality $H_1 \leq H_2$ holds. Then the validity
of the inequalities $-H_1^{-1} \le -H_2^{-1}$ and $H_2^{-1} \le
H_1^{-1}$ is characterized in terms of the inertia of $H_1$ and
$H_2$. Such results are known for matrices and boundedly invertible
operators. In the present paper those results are extended to
selfadjoint, in general unbounded, not necessarily boundedly
invertible, operators and, more generally, for selfadjoint relations
in separable Hilbert spaces.
\end{abstract}

\maketitle

\section{Introduction}

Let $H_1$ and $H_2$ be selfadjoint matrices, operators, or relations
(multivalued operators) in a separable Hilbert space, which is not
necessarily finite-dimensional. This paper is concerned with a
question which goes back to K.~L\"owner: what  are the implications
of the inequality $H_1 \le H_2$ for the inverses of $H_1$ and $H_2$;
cf. \cite{A,L34}.

Here specific conditions are investigated under which the implication
\begin{equation}\label{antieq}
H_1 \le H_2 \quad \Rightarrow \quad H_2^{-1} \le H_1^{-1}
\end{equation}
is true. In the literature such results are often formulated as
\textit{antitonicity results}, see e.g. \cite{BNS,HaNo,N,S}. Of
course, the above implication does not hold in general; a simple
counterexample is $H_1=-I$ and $H_2=I$. In the finite-dimensional
setting necessary and sufficient conditions for the implication in
\eqref{antieq} to hold are given by the following antitonicity
theorem, see \cite{BNS,N}. Recall that the inertia of the
selfadjoint matrix $H_i$, $i=1,2$, is the ordered triplet,
$\sfi(H_i) = \{\sfi_i^+,\sfi_i^-,\sfi_i^0\}$, of the numbers of
positive, negative, and zero eigenvalues of $H_i$.

\begin{theorem}\label{antith}
Let $H_1$ and $H_2$ be invertible selfadjoint matrices in $\dC^n$
and assume that $H_1 \leq H_2$. Then
\[ H_2^{-1} \leq H_1^{-1} \quad \textrm{if and only if} \quad {\sf i}(H_1) = {\sf i}(H_2).\]
\end{theorem}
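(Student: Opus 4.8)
The plan is to prove both implications by combining the monotonicity of eigenvalues under the L\"owner order with a one-parameter interpolation argument. Throughout I use that invertibility of $H_i$ means $\sfi^0(H_i)=0$, so that $\sfi^+(H_i)+\sfi^-(H_i)=n$, and that passing to the inverse of an invertible selfadjoint matrix preserves the signs of the eigenvalues, whence $\sfi(H_i^{-1})=\sfi(H_i)$. The basic tool is the Weyl monotonicity principle: if $A\le B$ are selfadjoint and their eigenvalues are listed in increasing order as $\lambda_1(A)\le\cdots\le\lambda_n(A)$ and $\lambda_1(B)\le\cdots\le\lambda_n(B)$, then $\lambda_k(A)\le\lambda_k(B)$ for every $k$. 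Consequently $\{k:\lambda_k(B)<0\}\subseteq\{k:\lambda_k(A)<0\}$, which yields the inequality $\sfi^-(A)\ge\sfi^-(B)$.

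For the necessity of the inertia condition I would argue by sandwiching. Applying the inequality $\sfi^-(A)\ge\sfi^-(B)$ to the hypothesis $H_1\le H_2$ gives $\sfi^-(H_1)\ge\sfi^-(H_2)$. Applying it instead to the assumed conclusion $H_2^{-1}\le H_1^{-1}$ and then invoking $\sfi^-(H_i^{-1})=\sfi^-(H_i)$ gives $\sfi^-(H_2)=\sfi^-(H_2^{-1})\ge\sfi^-(H_1^{-1})=\sfi^-(H_1)$. The two inequalities force $\sfi^-(H_1)=\sfi^-(H_2)$, and since both matrices are invertible the remaining components agree as well, so $\sfi(H_1)=\sfi(H_2)$.

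For sufficiency I would connect $H_1$ and $H_2$ by the affine path $H_t=(1-t)H_1+tH_2$ for $t\in[0,1]$, whose derivative is the constant nonnegative matrix $K=H_2-H_1\ge0$. Since $dH_t/dt=K\ge0$, each ordered eigenvalue $\lambda_k(t)$ is a nondecreasing continuous function of $t$. The key step is to show that the equal-inertia assumption forces $H_t$ to be invertible for every $t\in[0,1]$: an eigenvalue that is positive at $t=0$ stays positive by monotonicity, while an eigenvalue that is negative at $t=0$ cannot reach $0$ without crossing to positive values at $t=1$ (again by monotonicity and the invertibility of $H_2$), which would strictly decrease the count of negative eigenvalues and violate $\sfi^-(H_1)=\sfi^-(H_2)$. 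Hence no eigenvalue vanishes on $[0,1]$, and $H_t$ is nonsingular throughout.

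With invertibility along the path secured, the map $t\mapsto H_t^{-1}$ is differentiable and the standard formula gives $\tfrac{d}{dt}H_t^{-1}=-H_t^{-1}KH_t^{-1}$. Because $K\ge0$ and $H_t^{-1}$ is selfadjoint, the matrix $H_t^{-1}KH_t^{-1}$ is nonnegative, so $t\mapsto\langle H_t^{-1}x,x\rangle$ is nonincreasing for each vector $x$. Evaluating at the endpoints yields $\langle H_2^{-1}x,x\rangle\le\langle H_1^{-1}x,x\rangle$ for all $x$, that is $H_2^{-1}\le H_1^{-1}$. I expect the main obstacle to be the invertibility-along-the-path step: the differentiation argument is routine once $H_t$ is known to be nonsingular, but ruling out a vanishing eigenvalue at some interior $t$ is exactly where the equality of inertia is indispensable.
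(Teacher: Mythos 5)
Your proof is correct, but it follows a genuinely different route from the one the paper takes (the paper itself only cites the literature for Theorem~\ref{antith}, but its own argument appears in the proof of the infinite-dimensional version, Theorem~\ref{antith2}). For necessity you and the paper do essentially the same thing: Weyl monotonicity of ordered eigenvalues is the matrix instance of the paper's Proposition~\ref{ineqprop2}, and sandwiching $\sfi^-$ via $H_1\le H_2$ and $H_2^{-1}\le H_1^{-1}$ is exactly the paper's argument. For sufficiency, however, the paper uses K\"othe's inertia factorization $H_j=V_j^*JV_j$ with $J=I_{\sfi_1^+}\oplus(-I_{\sfi_1^-})$, rewrites $H_1\le H_2$ as $0\le J-U^*JU$ with $U=V_1V_2^{-1}$, and transfers nonnegativity to $J-UJU^*$ by a block-congruence identity that preserves inertia; inverting the factorization then gives $H_2^{-1}\le H_1^{-1}$. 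You instead run the affine homotopy $H_t=(1-t)H_1+tH_2$, use equality of inertia plus monotonicity of the ordered eigenvalues to keep $H_t$ nonsingular on $[0,1]$ (the key point, which you argue correctly: for $k\le\sfi_1^-$ one has $\lambda_k(t)\le\lambda_k(1)<0$, and for $k>\sfi_1^-$ one has $\lambda_k(t)\ge\lambda_k(0)>0$), and then integrate $\tfrac{d}{dt}H_t^{-1}=-H_t^{-1}KH_t^{-1}\le 0$. Your argument is more elementary and self-contained for matrices, but it is intrinsically finite-dimensional: it depends on counting eigenvalues and on their continuity along the path, and it breaks down when the negative spectrum is infinite or continuous. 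The paper's congruence argument buys the extension to bounded, boundedly invertible operators on a Hilbert space under the sole assumption $\min\{\sfi_2^+,\sfi_1^-\}<\infty$, which is what the rest of the paper needs as the engine for its perturbation and limit arguments.
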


The condition that the matrices $H_1$ and $H_2$ are invertible means
that $\sfi_1^0=\sfi_2^0=0$; hence the condition ${\sf i}(H_1) = {\sf
i}(H_2)$ in Theorem \ref{antith} is equivalent to
$\sfi_1^-=\sfi_2^-$ and to $\sfi_1^+=\sfi_2^+$. If in Theorem
\ref{antith} the matrices $H_1$ and $H_2$ are not invertible, then
the inverses $H_1^{-1}$ and $H_2^{-1}$ still exist in the sense of
linear relations (multivalued mappings). With this interpretation
Theorem \ref{antith} can be generalized to obtain the following two
results, which are new and applicable already in the
finite-dimensional setting (cf. \cite{BHSW3,BHSW2}).

\begin{theorem}\label{intro2}
Let $H_1$ and $H_2$ be selfadjoint relations in $\dC^n$ and
assume that $H_1 \leq H_2$. Then
\[
 H_2^{-1} \leq H_1^{-1} \quad \textrm{if and only if} \quad
 {\sf i}_1^- = {\sf i}_2^-.
 \]
\end{theorem}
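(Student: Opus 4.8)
The plan is to control both implications through the behaviour of the negative inertia ${\sf i}^-$ under the two operations in play: inversion and enlargement of a selfadjoint relation. For a selfadjoint relation $H$ in $\dC^n$ write the orthogonal spectral decomposition
\[ \dC^n = \cE^+_H \oplus \cE^-_H \oplus \ker H \oplus \mul H, \]
where $H$ restricts to a selfadjoint operator with strictly positive, respectively strictly negative, spectrum on $\cE^\pm_H$. Two observations govern the whole argument. First, inversion interchanges $\ker H$ with $\mul H$ and preserves $\cE^\pm_H$ together with the signs of the corresponding eigenvalues; hence ${\sf i}^+(H^{-1}) = {\sf i}^+(H)$ and, decisively, ${\sf i}^-(H^{-1}) = {\sf i}^-(H)$. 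Second, I would isolate the monotonicity lemma: if $A \le B$ are selfadjoint relations in $\dC^n$, then ${\sf i}^-(B) \le {\sf i}^-(A)$. For the operator parts this is the min--max principle --- enlarging a relation only moves eigenvalues upward and so can only reduce the number of negative ones --- and the general statement follows once the kernels and multivalued parts are accounted for.

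Granting these, the forward implication is immediate. Suppose $H_2^{-1} \le H_1^{-1}$. The monotonicity lemma applied to this pair yields ${\sf i}^-(H_1^{-1}) \le {\sf i}^-(H_2^{-1})$, which by inversion--invariance of ${\sf i}^-$ reads ${\sf i}_1^- \le {\sf i}_2^-$. The same lemma applied to $H_1 \le H_2$ gives ${\sf i}_2^- \le {\sf i}_1^-$. Hence ${\sf i}_1^- = {\sf i}_2^-$.

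The converse carries the real content, and I would reduce it to Theorem \ref{antith}. The first task is to read off the structural consequences of the order relation on the degenerate subspaces: $H_1 \le H_2$ forces $\mul H_1 \subseteq \mul H_2$ (the larger relation carries the larger ``$+\infty$'' part), while the additional hypothesis ${\sf i}_1^- = {\sf i}_2^-$ forces the matching inclusion $\ker H_2 \subseteq \ker H_1$ and pins down the signs on the remaining directions; the latter is precisely the statement that equality of negative inertia prevents a zero direction of $H_2$ from becoming a genuinely negative direction of $H_1$. Using these, I would split $\dC^n$ into an invertible core, on which $H_1$ and $H_2$ restrict to invertible selfadjoint operators with equal negative inertia, together with the complementary subspaces built from the kernels and multivalued parts. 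On the core the hypotheses are exactly those of Theorem \ref{antith}, which yields $H_2^{-1} \le H_1^{-1}$ there. On each complementary direction the inequality reduces to a scalar comparison --- a finite value against a multivalued ``$+\infty$'' of $H_1^{-1}$, or a zero value of $H_2^{-1}$ against a finite one --- in which the relevant signs are exactly those guaranteed by ${\sf i}_1^- = {\sf i}_2^-$; hence $H_2^{-1} \le H_1^{-1}$ holds there too, and reassembling the blocks gives the inequality on all of $\dC^n$.

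The main obstacle is the bookkeeping on the degenerate subspaces. Establishing the monotonicity lemma together with the inclusions $\mul H_1 \subseteq \mul H_2$ and $\ker H_2 \subseteq \ker H_1$ directly from the definition of $\le$ for possibly non-semibounded, multivalued selfadjoint relations, and --- the genuinely delicate point --- choosing the decomposition of $\dC^n$ so that the operator parts of $H_1$ and $H_2$ are simultaneously compressed to one common invertible core, is where the work lies; once the order relation is written in a form that controls the operator parts and the multivalued parts at once, these steps are routine. The conceptual weight of the theorem is carried entirely by the inversion--invariance of ${\sf i}^-$ together with the monotonicity lemma.
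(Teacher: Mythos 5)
Your forward implication is sound and coincides with the paper's: it is Proposition~\ref{ineqprop2}(ii) (the ordering forces ${\sf i}^-(H_2)\le {\sf i}^-(H_1)$) combined with the inertia formula \eqref{coninnum1} for inverses, applied once to $H_1\le H_2$ and once to $H_2^{-1}\le H_1^{-1}$. The structural facts you invoke for the converse are also correct: $\mul H_1\subset\mul H_2$ is \eqref{incl}, and ${\sf i}_1^-={\sf i}_2^-<\infty$ gives $\ker H_2\subset\ker H_1$ by Proposition~\ref{toegevoegd2}(iii).

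The gap is in the step you yourself flag as ``where the work lies'': the simultaneous compression of $H_1$ and $H_2$ to one common invertible core does not exist in general, so the blockwise reduction to Theorem~\ref{antith} cannot be set up. The surviving degenerate subspaces $\ker H_1$ and $\mul H_2$ need not be orthogonal to each other nor compatible with any common reducing decomposition. Concretely, in $\dC^2$ take
\[
H_1=\begin{pmatrix}1&-1\\-1&1\end{pmatrix},
\]
and let $H_2$ be the relation with $\mul H_2=\spn\{e_1\}$ and orthogonal operator part equal to multiplication by $5$ on $\spn\{e_2\}$. Then $\dom H_2=\spn\{e_2\}\subset\dom H_1$ and $(H_1e_2,e_2)=1\le 5$, so $H_1\le H_2$ by \eqref{ord00}, and ${\sf i}_1^-={\sf i}_2^-=0$; the conclusion $H_2^{-1}\le H_1^{-1}$ does hold here, as the theorem predicts. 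But $\ker H_1=\spn\{(1,1)\}$ while $\mul H_2=\spn\{e_1\}$, so the only candidate for an invertible core of $H_1$ is $\spn\{(1,-1)\}$ and for $H_2$ it is $\spn\{e_2\}$ --- two distinct, non-orthogonal lines. No orthogonal decomposition of $\dC^2$ reduces both relations simultaneously, the ``scalar comparisons on complementary directions'' cannot even be formulated, and (a further obstacle) the compression of an inverse is not the inverse of a compression, so an approximate alignment would not help. The paper avoids decomposition altogether and argues by perturbation: replace $H_j$ by $H_j+\epsilon_j$ with $0<\epsilon_1\le\epsilon_2$ small enough that both shifts are boundedly invertible with unchanged negative inertia, apply the antitonicity result already established for invertible relations (Theorem~\ref{finth2}, itself reduced to the bounded boundedly invertible case of Theorem~\ref{antith2} via the resolvent transform \eqref{Tra}), and then let $\epsilon_1\downarrow0$ and $\epsilon_2\downarrow0$ using the monotone limit Lemma~\ref{s3}. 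To repair your argument, the shift-and-limit mechanism (or at least the shift, which in $\dC^n$ needs only a trivial limit) is the ingredient to substitute for the decomposition step.
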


\begin{theorem}\label{intro1}
Let $H_1$ and $H_2$ be selfadjoint relations in $\dC^n$ and
assume that $H_1 \leq H_2$. Then
\[
 -H_1^{-1} \leq -H_2^{-1} \quad \textrm{if and only if} \quad
 {\sf i}_1^- +{\sf i}_1^0 = {\sf i}_2^- +{\sf i}_2^0.
 \]
\end{theorem}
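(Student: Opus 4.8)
The plan is to derive Theorem \ref{intro1} from Theorem \ref{intro2} using the elementary identity $-H^{-1}=(-H)^{-1}$, valid for every selfadjoint relation $H$, which rewrites the inequality $-H_1^{-1}\le-H_2^{-1}$ as the antitonicity statement $(-H_1)^{-1}\le(-H_2)^{-1}$ for the selfadjoint relations $-H_1$ and $-H_2$. The obstruction to invoking Theorem \ref{intro2} at once is that the ordering of selfadjoint relations is not invariant under negation: the hypothesis $H_1\le H_2$ forces $\mul H_1\subseteq\mul H_2$, an inclusion which negation reverses. Hence this reduction is immediate only when $\mul H_1=\mul H_2$. In that situation $H_1\le H_2$ is equivalent to $-H_2\le-H_1$, and Theorem \ref{intro2} applied to the pair $-H_2\le-H_1$ shows that $(-H_1)^{-1}\le(-H_2)^{-1}$ holds if and only if $\sfi^-(-H_2)=\sfi^-(-H_1)$, that is, if and only if $\sfi_1^+=\sfi_2^+$; since $H_1$ and $H_2$ then have operator parts acting on spaces of equal dimension, this is the same as $\sfi_1^-+\sfi_1^0=\sfi_2^-+\sfi_2^0$, as claimed.

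To remove the hypothesis $\mul H_1=\mul H_2$ I would first record the two structural consequences of the data: from $H_1\le H_2$ one gets $\mul H_1\subseteq\mul H_2$, and from the inequality $-H_1^{-1}\le-H_2^{-1}$ one gets, via $\mul(-H^{-1})=\ker H$, the inclusion $\ker H_1\subseteq\ker H_2$. I would then interpolate a selfadjoint relation $\wt H_1$ with $\mul\wt H_1=\mul H_2$ whose operator part is the compression of the operator part of $H_1$ to $\cdom H_2=(\mul H_2)^\perp$; informally $\wt H_1$ arises from $H_1$ by sending the ``escaping'' directions $\cN:=\mul H_2\ominus\mul H_1$ to $+\infty$. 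A short computation with the associated quadratic forms gives $H_1\le\wt H_1\le H_2$, and $\wt H_1$ and $H_2$ share the same multivalued part, so the equal-multivalued-part case of the first paragraph applies to $(\wt H_1,H_2)$ and leaves only the comparison of $H_1$ with $\wt H_1$.

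This last comparison is the crux, and the step I expect to be hardest, because $\cN$ is not reducing for $H_1$ and the splitting is therefore not block diagonal. Writing the operator part of $H_1$ in block form along $\cdom H_2\oplus\cN$, I would show that $-H_1^{-1}\le-\wt H_1^{-1}$ holds exactly when the Schur complement of $H_1$ with respect to its $\cdom H_2$-block, a selfadjoint form on $\cN$, is nonnegative, and that, through Haynsworth's inertia additivity formula, this same positivity is equivalent to $\sfi_1^-+\sfi_1^0=\sfi^-(\wt H_1)+\sfi^0(\wt H_1)$; intuitively it records that none of the nonpositive spectral mass of $H_1$ escapes to $+\infty$. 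Combining this equivalence with the two inclusions and the first two paragraphs then yields both implications of the theorem. The delicate points, which I would treat separately, are the singular boundary cases, where this Schur complement and the inverses involved are genuine multivalued relations rather than operators, so that both Haynsworth's formula and the inequality have to be read in their relation-theoretic forms; this is precisely where the inclusion $\ker H_1\subseteq\ker H_2$ is needed to match the counts.
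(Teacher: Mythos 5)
Your reduction of Theorem \ref{intro1} to Theorem \ref{intro2} is circular in the context of this paper: the logical dependency runs the other way. Theorem \ref{intro2} is obtained here as the finite-dimensional case of Theorem \ref{antinew2}, and the proof of Theorem \ref{antinew2} applies Theorem \ref{finth2} --- the infinite-dimensional form of the very statement you are proving --- to the shifted pair $H_1+\epsilon_1\le H_2+\epsilon_2$. So unless you supply an independent proof of Theorem \ref{intro2}, your first paragraph assumes what is to be shown. The paper's actual route is direct: shift $H_j\mapsto H_j-\epsilon_j$ into the spectral gap at $0$, transport the inequality through the spectral mapping identity \eqref{Tra} to a pair of bounded, boundedly invertible operators, apply the K\"othe-factorization antitonicity theorem (Theorem \ref{antith2}), and remove the shifts with the monotone limit result of Lemma \ref{s3}. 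That said, your structural observations are sound as far as they go: the equal-multivalued-part case is exactly Corollary \ref{inequh} plus the count $\sfi_j^++\sfi_j^-+\sfi_j^0=n-\sfi_j^\infty$, the interpolated relation $\widetilde H_1$ with $\mul \widetilde H_1=\mul H_2$ and operator part the compression of $(H_1)_s$ to $(\mul H_2)^\perp$ does satisfy $H_1\le\widetilde H_1\le H_2$, and Proposition \ref{ineqprop2} gives the sandwich $\sfi_1^-+\sfi_1^0\ge\sfi^-(\widetilde H_1)+\sfi^0(\widetilde H_1)\ge\sfi_2^-+\sfi_2^0$ needed to split the inertia equality into two.

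The genuine gap is the step you yourself identify as the crux and then leave as a plan: the comparison of $H_1$ with $\widetilde H_1$. The asserted equivalence of $-H_1^{-1}\le-\widetilde H_1^{-1}$ with \emph{nonnegativity} of the Schur complement is not proved and is not correct as stated. Take $n=2$, $\mul H_1=\{0\}$, $\cN=\spn\{e_2\}$, $H_1=\left(\begin{smallmatrix} a & b\\ \bar b & d\end{smallmatrix}\right)$ with $a>0$, so that $\widetilde H_1$ has operator part $a$ on $\spn\{e_1\}$ and $S=d-|b|^2/a$. If $S=0$ and $b\neq0$, then $S$ is nonnegative but $\ker H_1$ is one-dimensional while $\ker\widetilde H_1=\{0\}$, so $\mul(-H_1^{-1})=\ker H_1\not\subset\ker\widetilde H_1=\mul(-\widetilde H_1^{-1})$ and the inequality $-H_1^{-1}\le-\widetilde H_1^{-1}$ fails --- consistently with the inertia count, since $\sfi^0(S)>0$ already destroys the equality $\sfi_1^-+\sfi_1^0=\sfi^-(\widetilde H_1)+\sfi^0(\widetilde H_1)$. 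So the correct criterion must be positive \emph{definiteness} of the Schur complement, and in the irregular case $\ran B\not\subset\ran A$ the Carlson--Haynsworth--Markham correction terms in the inertia additivity formula have to be tracked and shown to cancel in the combination $\sfi^-+\sfi^0$. None of this is obviously unworkable, but it is precisely where the content of the theorem lives, and it is not carried out.
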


Clearly, when the selfadjoint relations $H_1$ and $H_2$ are
invertible matrices, then Theorem~\ref{intro2} and \ref{intro1}
coincide with Theorem~\ref{antith}. However, in the case of
non-invertible matrices $H_1$ and $H_2$ the above statements are new
extensions of Theorem~\ref{antith}. Note that, since $H_1^{-1}$ and
$H_2^{-1}$ are selfadjoint relations, the condition $ -H_1^{-1} \leq
-H_2^{-1}$ is in general different from the condition $H_2^{-1} \leq
H_1^{-1}$.

From either of the above theorems also other previously known
antitonicity results in the matrix literature can be derived as
special cases. For example the main antitonicity result for the
Moore-Penrose inverse $H^+$ of a selfadjoint matrix $H$, see
\cite[Theorem~2]{BNS}, can be obtained as a direct consequence of
Theorem~\ref{intro2}.

\begin{corollary}\label{mp}
Let $H_1$ and $H_2$ be selfadjoint matrices in $\dC^n$
and assume that $H_1 \leq H_2$. Then
\[
 H_2^+ \leq H_1^+ \quad \textrm{if and only if} \quad
 \sfi(H_1) = \sfi(H_2).
 \]
\end{corollary}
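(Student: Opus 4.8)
The plan is to deduce the corollary from Theorem~\ref{intro2} by viewing the Moore--Penrose inverse as the operator part of the inverse relation and by splitting the inertia condition $\sfi(H_1)=\sfi(H_2)$ into the two pieces that govern taking inverses. First I will record the elementary dictionary: for a selfadjoint matrix $H$ in $\dC^n$ the inverse relation $H^{-1}=\{\{Hx,x\}:x\in\dC^n\}$ has multivalued part $\mul H^{-1}=\ker H$, and its operator part, acting on $(\ker H)^\perp=\ran H$ and extended by $0$ on $\ker H$, is exactly $H^+$. In particular $\ker H^+=\ker H$ and $\sfi(H^+)=\sfi(H)$, since $H^+$ inverts the eigenvalues of $H$ on $\ran H$ while preserving their signs.

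Next I will carry out the inertia bookkeeping. Since $H_1\le H_2$, the minimax principle gives $\lambda_k(H_1)\le\lambda_k(H_2)$ for the eigenvalues listed in increasing order, whence $\sfi_2^-\le\sfi_1^-$ and $\sfi_1^+\le\sfi_2^+$. The auxiliary step is a kernel lemma: if $A\le B$ are selfadjoint and $\sfi^-(A)=\sfi^-(B)$, then $\ker B\subseteq\ker A$. To see this, take $x\in\ker B$; writing $w=tx+v$ with $v$ in the negative spectral subspace $L$ of $B$, one has $\langle Bw,w\rangle=\langle Bv,v\rangle\le 0$ and hence $\langle Aw,w\rangle\le 0$, so if $\langle Ax,x\rangle<0$ then $A$ would be negative definite on $\langle x\rangle\oplus L$, a subspace of dimension $\sfi^-(B)+1>\sfi^-(A)$, which is impossible. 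Therefore $\langle Ax,x\rangle=0=\langle Bx,x\rangle$, and since $B-A\ge 0$ annihilates its isotropic vectors, $(B-A)x=0$, so $Ax=Bx=0$ and $x\in\ker A$. Applying this to $(A,B)=(H_1,H_2)$ shows that as soon as $\sfi_1^-=\sfi_2^-$ one has $\ker H_2\subseteq\ker H_1$; together with a dimension count this yields the equivalence, valid under $H_1\le H_2$,
\[
\sfi(H_1)=\sfi(H_2)\quad\Longleftrightarrow\quad \sfi_1^-=\sfi_2^-\ \text{ and }\ \ker H_1=\ker H_2.
\]

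For the implication $H_2^+\le H_1^+\Rightarrow\sfi(H_1)=\sfi(H_2)$ I will argue purely at the matrix level. From $H_2^+\le H_1^+$ the same minimax estimate gives $\sfi^-(H_1^+)\le\sfi^-(H_2^+)$, i.e. $\sfi_1^-\le\sfi_2^-$ by the dictionary, and combined with $\sfi_2^-\le\sfi_1^-$ this forces $\sfi_1^-=\sfi_2^-$. Now the kernel lemma applied to $(H_1,H_2)$ gives $\ker H_2\subseteq\ker H_1$, while applied to $(H_2^+,H_1^+)$, whose negative inertias agree, it gives $\ker H_1^+\subseteq\ker H_2^+$, that is $\ker H_1\subseteq\ker H_2$. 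Hence $\ker H_1=\ker H_2$, and the displayed equivalence yields $\sfi(H_1)=\sfi(H_2)$.

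For the converse I will invoke Theorem~\ref{intro2}. Assuming $\sfi(H_1)=\sfi(H_2)$, the displayed equivalence gives $\sfi_1^-=\sfi_2^-$, so Theorem~\ref{intro2} provides the relation inequality $H_2^{-1}\le H_1^{-1}$; it also gives $\ker H_1=\ker H_2=:\cN$, so $\mul H_2^{-1}=\mul H_1^{-1}=\cN$. Because these two selfadjoint relations share the same multivalued part, their ordering reduces to the ordering of their operator parts on the common space $\cN^\perp$, and by the dictionary those operator parts are $H_1^+$ and $H_2^+$ (both vanishing on $\cN$); thus $H_2^{-1}\le H_1^{-1}$ reads precisely as $H_2^+\le H_1^+$. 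The hard part will be exactly this last passage from the relation inequality to the genuine matrix inequality: the relation ordering does not by itself force $\mul H_1^{-1}=\mul H_2^{-1}$ — which is what separates Corollary~\ref{mp}, with its full inertia condition, from Theorem~\ref{intro2}, which needs only $\sfi_1^-=\sfi_2^-$ — so the equality $\ker H_1=\ker H_2$ drawn from the inertia hypothesis is indispensable before the operator parts can be compared.
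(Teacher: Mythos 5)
Your proof is correct and follows essentially the same route as the paper's (see the proof of Theorem~\ref{th4}, whose finite-dimensional specialization is Corollary~\ref{mp}): the backward direction rests on Theorem~\ref{intro2} together with the identification of $H^+$ as the operator part of the relation $H^{-1}$ extended by zero on $\ker H$, with the kernel equality $\ker H_1=\ker H_2$ as the indispensable bridge, and the forward direction on inertia comparisons applied to both orderings. The only difference is that you re-prove the auxiliary facts (the inertia inequalities and the kernel inclusion, i.e.\ the matrix cases of Proposition~\ref{ineqprop2} and Proposition~\ref{toegevoegd2}) by elementary minimax and negative-definite-subspace arguments, where the paper obtains them from the spectral-measure Lemma~\ref{hulp}; both are sound.
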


It should be emphasized that both inequalities $H_2^{-1} \leq
H_1^{-1}$ and $-H_1^{-1} \leq -H_2^{-1}$ occur naturally in the
study of limits of monotone matrix functions, and they have
different geometrical implications; see \cite{BHSW2}. Such
inequalities between selfadjoint relations have interesting
applications, for instance, in the area of differential equations:
they appear in the study of the square-integrability of solutions of
definite canonical systems of differential equations; see
\cite{BHSW3} and the references therein.

The objective of this paper is to prove antitonicity results
analogous to Theorem~\ref{intro2} and Theorem~\ref{intro1}  for
selfadjoint operators or relations  $H_1$ and $H_2$  in a separable,
not necessarily finite-dimensional, Hilbert space. The results and
their proofs can be read with the finite-dimensional case in mind;
in fact, the proofs of the main two antitonicity theorems,
Theorem~\ref{finth2} and Theorem~\ref{antinew2} below, do not
essentially simplify in the finite-dimensional setting. As a
preparation some facts on selfadjoint relations in Hilbert spaces
are in given Section 2. In particular, the notion of ordering for
selfadjoint relations which are bounded from below and the concept
of inertia are introduced. Section~\ref{secanti} contains the main
results of the paper: the two infinite-dimensional variants of
Theorem~\ref{intro2} and \ref{intro1}. The important ingredients in
their proofs are an infinite-dimensional version of
Theorem~\ref{antith}, which has been independently established in
\cite{DM91b,S,HaNo0} (cf. \cite{HaNo}, and see also \cite{AK}), combined
with suitable perturbations arguments, and a general limit result on
monotone operator functions. Various consequences of the two main
antitonicity results are discussed, among them an
infinite-dimensional version of the antitonicity result for
Moore-Penrose inverses in Corollary~\ref{mp}.

\section{Ordering and inertia of selfadjoint relations}

This section contains an introduction to selfadjoint relations in
Hilbert spaces. In particular the notions of ordering and inertia
for selfadjoint relations in Hilbert spaces are introduced and
investigated.

\subsection{Linear relations}

Let $\sH$ be a Hilbert space with scalar product $(\cdot,\cdot)$
and corresponding norm $\|\cdot\|$. A \textit{(closed) relation} $H$ in $\sH$
is a (closed) linear subspace of the product space $\sH \times \sH$.
As such, $H$ is considered to consist of pairs
$\{h,k\} \in \sH \times \sH$, so that $H$ is the graph of
a multivalued (linear) operator in $\sH$.
The \textit{domain, range, kernel,} and \textit{multivalued part}
of a relation $H$ are defined as follows:
\[
\begin{split}
 \dom H&=\{h \in\sH :\,\{h,k\}\in H\}, \quad \ran H =\{k\in\sH :\,\{h,k\}\in H\},\\
  \quad \ker H&=\{h\in\sH :\,\{h,0\}\in H\}, \quad \mul H=\{k\in\sH :\,\{0,k\}\in H\}.
\end{split}
\]
Note that, if $H$ is closed then $\ker H$ and $\mul H$ are closed
subspaces. A number $\lambda \in \dC$ is called an
\textit{eigenvalue} of $H$ if $\{h, \lambda h\} \in H$ for some
nontrivial $h \in \sH$, which is then called an
\textit{eigenvector}. Similarly,  $\infty$ is said to be an
\textit{eigenvalue} of $H$ if $\{0,k\} \in H$ or, equivalently, $k
\in \mul H$, for some nontrivial  $k \in \sH$, which is then called
an  \textit{eigenvector}. The relation $H$ is an operator precisely
when $\mul H=\{0\}$, i.e., when $\infty$ is not an eigenvalue of
$H$.

Each relation $H$ has an \textit{inverse} $H^{-1}$ and an
\textit{adjoint} $H^*$, which are defined as
\begin{equation*}\label{definv}
\begin{split}
 H^{-1}&=\bigl\{\{k,h\}:\,\{h,k\}\in H\bigr\};\\
 H^*&=\bigl\{\,\{h,k\} \in \sH\times \sH :\,
 (g,h)=(f,k) \mbox{ for all } \{f,g\}\in H\,\bigr\}.
\end{split}
\end{equation*}
In particular, $\dom H^{-1}=\ran H$ and $\ker H^{-1}=\mul H$.
Note that the adjoint is a closed relation in $\sH$ and that it coincides with the usual adjoint when
$H$ is a densely defined operator.

For a relation $H$ in $\sH$ and $\lambda \in \dC$, the relation
$H-\lambda$ is given by
\[
 H-\lambda=\bigl\{\,\{h, k- \lambda h \}:\, \{h, k \} \in H\,\bigr\}.
\]
Its inverse, $(H-\lambda)^{-1}$, is a relation whose kernel and
multivalued part coincide with $\mul H$ and $\ker (H-\lambda)$,
respectively. Furthermore, it satisfies the following spectral
mapping identity:
\begin{equation}\label{Tra}
(H-\lambda)^{-1}
=-\frac{1}{\lambda}
+\frac{1}{\lambda^2}\left(-H^{-1}-\left(-\frac{1}{\lambda}\right)\right)^{-1},
\quad \lambda \in \dC \setminus \{0\}.
\end{equation}

For a closed relation $H$ the number $\lambda \in \dC$ is said to belong
to the \textit{resolvent set} of $H$, $\lambda \in \rho(H)$,
if $(H-\lambda)^{-1}$ is an everywhere defined operator.
The resolvent set is an open subset of $\dC$.
For $\lambda \in \rho(H)$ the operator
$(H-\lambda)^{-1}$ is called the \textit{resolvent operator}
of $H$ (at $\lambda$).

\subsection{Selfadjoint relations}
A relation $H$ is said to be \textit{symmetric} if $(k,h)\in\dR$ for
all $\{h,k\}\in H$. By the polarization formula, $H$ is symmetric
precisely when $H\subset H^*$. A relation $H$ is called
\textit{selfadjoint} if $H=H^*$; in particular, a selfadjoint
relation is automatically closed. A selfadjoint relation $H$ in
$\sH$ induces the following orthogonal decompositions of the space:
\begin{equation}\label{deck}
\sH = \cdom H \oplus \mul H \quad \textrm{and} \quad \sH = \cran H \oplus \ker H,
\end{equation}
where $\cdom H$ and $\cran H$ indicate the closures of
$\dom H$ and $\ran H$, respectively.
This shows that $H$ admits the following orthogonal decomposition:
\begin{equation}\label{null}
 H=H_s \hoplus (\{0\} \times \mul H),
\end{equation}
where $H_s=H \cap (\cdom H \times \cdom H)$, the so-called
\textit{orthogonal operator part} of $H$, is a selfadjoint operator
in $\cdom H$ and $\{0\} \times \mul H$ is a selfadjoint relation in
$\mul H$. The symbol $\hoplus$ in \eqref{null} indicates the
orthogonality of the summands. It follows from \eqref{null} that the
finite spectra of $H$ and of $H_s$ coincide. Hence
$\cmr\subset\rho(H)$ and the selfadjoint operator part $H_s$ is
bounded if and only if $\dom H$ is closed. Moreover, if $\ran H$ is
closed, then there exists a reduced neighborhood of $0$ in $\dR$
which belongs to $\rho(H)$, and $0$ is at most an isolated
eigenvalue of $H$. Let $E_s(\cdot)$ be the spectral function of
$H_s$ in $\cdom H$. Define the spectral function $E(\cdot)$ for $H$
in $\sH$  by $E(t)=E_s(t) \oplus 0_{\mul H}$, $t \in \dR$, (cf.
\eqref{null})  so that
\begin{equation}\label{reso}
 (H-\lambda)^{-1}= \int_\dR \frac{1}{s-\lambda}\,dE(s),
 \quad \lambda \in \rho(H).
\end{equation}
For a measurable function $\varphi: \dR \to \dC$, define
$\varphi(H)=\varphi(H_s)\hoplus (\{0\} \times \mul H)$.

A selfadjoint relation $H$ in a Hilbert space $\sH$ is said to be
\textit{bounded from below} by  $m \in \dR$ if its operator part
$H_s$ is bounded from below by $m$:
\[
(H_s h,h) \geq m (h,h) \quad \text{for all} \quad h \in \dom H=\dom H_s.
\]
Any such number $m$ is said to be \textit{a lower bound}.
The supremum of all lower bounds
is called \textit{the lower bound} of $H$. Any real
number smaller than the lower bound belongs to $\rho(H)$. If the
lower bound is nonnegative, then $H$ is called \textit{nonnegative}:
$H \geq 0$. Note that if $H$ has lower bound $m$, then $H-x$ has
lower bound $m-x$ for any $x \in \dR$. Therefore $H-x$ is
nonnegative for all $x \leq m$. In particular, if $x<m$ then
$(H-x)^{-1}$ is an everywhere defined positive bounded operator.

The \textit{square root} $H^{1/2}$
of a nonnegative selfadjoint relation $H$ is defined as
\begin{equation*}\label{nullh}
 H^{1/2}=(H_s)^{1/2} \hoplus (\{0\} \times \mul H).
\end{equation*}
For a nonnegative selfadjoint relation $H$ one has
\begin{equation}\label{nullh+}
\dom H \subset \dom H^{1/2}, \quad \cdom H=\cdom H^{1/2}, \quad \mul H=\mul H^{1/2}.
\end{equation}
Clearly, if $H_s$ is bounded, then
$\dom H=\dom H^{1/2}=(\mul H)^\perp$.

\subsection{Ordering of selfadjoint relations}
Let $H_1$ and $H_2$ be selfadjoint relations in a Hilbert space $\sH$
with lower bounds $m_1$ and $m_2$, respectively.
Then  $H_1$ and $H_2$  are said to satisfy $H_1\leq H_2$
if for a fixed $x<\min\{m_1,m_2\}$
\begin{equation}\label{ord01h}
0 \leq ((H_2-x)^{-1}h,h) \le ((H_1-x)^{-1}h,h)\quad \text{for all} \quad h \in \sH,
\end{equation}
see \cite{BHSW1,CS,HSSW}. The next proposition gives a characterization for the
ordering of selfadjoint relations, see \cite{CS,HSSW}.  According to this proposition
\eqref{ord01h} holds automatically for all $x<\min\{m_1,m_2\}$
if it holds for some $x<\min\{m_1,m_2\}$.

\begin{proposition}\label{oldh}
Let $H_1$ and $H_2$ be selfadjoint relations in a Hilbert space $\sH$
with lower bounds $m_1$ and $m_2$, respectively. Then
$H_1$ and $H_2$ satisfy $H_1 \leq H_2$ if and only if
for any $x< \min\{m_1,m_2\}$
\begin{equation}\label{dom}
\dom (H_{2}-x)^{1/2} \subset \dom (H_{1}-x)^{1/2}
\end{equation}
and
\begin{equation}\label{dom+}
\|(H_{1}-x)_s^{1/2}h\| \le \|(H_{2}-x)_s^{1/2}h\| \,\,\, \textrm{for all} \,\,\,  h \in
\dom (H_{2}-x)^{1/2}.
\end{equation}
\end{proposition}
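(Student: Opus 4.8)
The plan is to fix $x<\min\{m_1,m_2\}$ and set $A=H_1-x$ and $B=H_2-x$, which are nonnegative selfadjoint relations with positive lower bounds $m_1-x$ and $m_2-x$; thus $A^{-1}=(H_1-x)^{-1}$ and $B^{-1}=(H_2-x)^{-1}$ are everywhere defined bounded nonnegative operators with $\ker A^{-1}=\mul H_1$ and $\ker B^{-1}=\mul H_2$. In this notation \eqref{ord01h} becomes $0\le(B^{-1}h,h)\le(A^{-1}h,h)$ for all $h\in\sH$, while \eqref{dom}--\eqref{dom+} become $\dom B^{1/2}\subset\dom A^{1/2}$ together with $\|A_s^{1/2}h\|\le\|B_s^{1/2}h\|$ on $\dom B^{1/2}$. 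Since $A_s=(H_1)_s-x$ and $B_s=(H_2)_s-x$ differ from $(H_1)_s$ and $(H_2)_s$ by the bounded scalar $x$, the form domain $\dom(H_i-x)^{1/2}$ is independent of $x$ and the $x$-terms cancel in \eqref{dom+}; hence both \eqref{dom} and \eqref{dom+} are independent of the choice of $x$, and it suffices to prove the equivalence of \eqref{ord01h}, \eqref{dom}, \eqref{dom+} at one fixed $x$, the clause ``for any $x$'' then being automatic (and, in passing, this yields the remark that \eqref{ord01h} holds for all $x$ once it holds for one).

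The argument rests on two formulas extracted from the spectral representation \eqref{reso}. The first is a variational (Legendre-type) identity: for a nonnegative selfadjoint relation $A$ with positive lower bound and every $g\in\sH$,
\[
(A^{-1}g,g)=\sup_{v\in\dom A^{1/2}}\big(2\RE(v,g)-\|A_s^{1/2}v\|^2\big),
\]
the supremum being attained at $u=A^{-1}g\in\dom A\subset\dom A^{1/2}$. This follows by completing the square: writing $\{u,g\}\in A$ as $g=A_su+w$ with $w\in\mul A$, one has $(v,g)=(A_s^{1/2}v,A_s^{1/2}u)$ for every $v\in\dom A^{1/2}\subset\cdom A$, so the bracket equals $\|A_s^{1/2}u\|^2-\|A_s^{1/2}(u-v)\|^2$. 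The second formula recovers the form from the resolvents: inserting \eqref{reso} one checks that $s\|g\|^2-s^2((A+s)^{-1}g,g)$ is nondecreasing in $s>0$ and, by monotone convergence applied to $\lambda/(1+\lambda/s)\uparrow\lambda$,
\[
\lim_{s\to\infty}\big(s\|g\|^2-s^2((A+s)^{-1}g,g)\big)=\|A_s^{1/2}g\|^2\quad\text{for }g\in\dom A^{1/2},
\]
while the limit is $+\infty$ when $g\notin\dom A^{1/2}$ (the component of $g$ in $\mul A$ contributing a divergent term $s\|\cdot\|^2$). Thus $\dom A^{1/2}$ is exactly the set where this monotone limit is finite.

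Given these, the implication \eqref{dom}--\eqref{dom+}$\Rightarrow$\eqref{ord01h} is immediate from the variational identity: for $v\in\dom B^{1/2}\subset\dom A^{1/2}$ one has $\|A_s^{1/2}v\|^2\le\|B_s^{1/2}v\|^2$, hence $2\RE(v,g)-\|B_s^{1/2}v\|^2\le 2\RE(v,g)-\|A_s^{1/2}v\|^2$; taking the supremum over $v\in\dom B^{1/2}$ on the left and enlarging the supremum on the right to all of $\dom A^{1/2}$ gives $(B^{-1}g,g)\le(A^{-1}g,g)$, which with $B^{-1}\ge0$ is \eqref{ord01h}. For the converse \eqref{ord01h}$\Rightarrow$\eqref{dom}--\eqref{dom+} I would first propagate the resolvent inequality to all parameters: with $S=A^{-1}\ge T=B^{-1}\ge0$ bounded, the identity $(A+s)^{-1}=S(I+sS)^{-1}$ and the operator monotonicity of $t\mapsto t/(1+st)=s^{-1}\big(1-(1+st)^{-1}\big)$ on $[0,\infty)$ yield $0\le(B+s)^{-1}\le(A+s)^{-1}$ for every $s>0$. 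Substituting this into the monotone resolvent formula gives $s\|g\|^2-s^2((A+s)^{-1}g,g)\le s\|g\|^2-s^2((B+s)^{-1}g,g)$ for all $g$ and $s$, and passing to the limit $s\to\infty$ shows that the limit functional of $A$ is dominated by that of $B$; its finiteness part is precisely \eqref{dom} and the comparison of values is precisely \eqref{dom+}.

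The main obstacle I anticipate is establishing the two representation formulas with due care for the multivalued parts: the operator parts $A_s$ and $B_s$ act in the different subspaces $\cdom H_1$ and $\cdom H_2$, so the square-root norms, the decomposition $g=A_su+w$, and the contribution of $\mul A$ to the monotone limit all have to be read off correctly from \eqref{reso} and the decomposition \eqref{null}. Once these formulas and the operator-monotonicity propagation step are in place, the equivalence itself is a short assembly of inequalities.
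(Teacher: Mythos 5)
Your argument is correct, but there is nothing in the paper to compare it against: the authors state Proposition~\ref{oldh} without proof, citing \cite{CS,HSSW}, so you have in effect supplied the proof that the paper outsources to the literature. Your route is the standard form-method one and all the delicate points are handled properly: the reduction to a single $x$ via the $x$-independence of \eqref{dom}--\eqref{dom+} is legitimate (the $-x\|h\|^2$ terms cancel and the form domains do not move), and it correctly recovers the remark preceding the proposition that \eqref{ord01h} for one $x$ implies it for all $x$. The variational identity $(A^{-1}g,g)=\sup_{v}\bigl(2\RE(v,g)-\|A_s^{1/2}v\|^2\bigr)$ is verified correctly, including the use of $\dom A^{1/2}\subset\cdom A=(\mul A)^\perp$ to kill the multivalued component $w$ of $g$, and it gives the direction \eqref{dom}--\eqref{dom+}$\Rightarrow$\eqref{ord01h} by shrinking the supremum set. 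For the converse, the identity $(A+s)^{-1}=A^{-1}(I+sA^{-1})^{-1}$ (equivalent to \eqref{Tra} at $\lambda=-s$), the operator monotonicity of $t\mapsto t/(1+st)$ on $[0,\infty)$, and the monotone-convergence characterization of $\dom A^{1/2}$ as the finiteness set of $\lim_{s\to\infty}\bigl(s\|g\|^2-s^2((A+s)^{-1}g,g)\bigr)$ are all correct, and together they deliver both the domain inclusion and the form inequality. Two cosmetic remarks: you use $s$ both for the operator part subscript $A_s$ and for the resolvent parameter, which you should disentangle in a written version; and your parenthetical explanation of why the limit is $+\infty$ off $\dom A^{1/2}$ only mentions the $\mul A$ component, whereas the case $g\in\cdom A\setminus\dom A_s^{1/2}$ is the one handled by monotone convergence --- both cases occur and both are in fact covered by your formulas.
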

If $\dom H_1$ and $\dom H_2$ are closed or, equivalently, if the
operator parts  $(H_{1})_s$ and $(H_{2})_s$ are bounded, then by
Proposition~\ref{oldh} (cf. \eqref{nullh+}) $H_1 \leq H_2$ if and
only if
\begin{equation}\label{ord00}
\dom H_{2} \subset \dom H_{1} \quad \text{and}\quad
 ((H_{1})_s h,h) \le ((H_{2})_s h,h) \quad \text{for all} \quad
h \in \dom H_{2}.
\end{equation}
In particular, if $\dom H_1=\dom H_2=\sH$, i.e., if
$H_1$ and $H_2$ are bounded selfadjoint operators, then the
inequality $H_1 \le H_2$ has the usual meaning.

The inclusion \eqref{dom} combined with \eqref{deck} and \eqref{nullh+}
yields the following implication
\begin{equation}\label{incl}
 H_1 \leq H_2 \quad \Rightarrow
 \quad \cdom H_2\subset\cdom H_1
 \quad\text{and}\quad\mul H_1 \subset \mul H_2.
\end{equation}

\begin{corollary}\label{inequh}
Let $H_1$ and $H_2$ be selfadjoint relations in a Hilbert space
$\sH$ with closed domains such that $H_1 \leq H_2$. Then $-H_2 \le
-H_1$ if and only if $\dom H_1 = \dom H_2$ or, equivalently, $\mul
H_1 = \mul H_2$.
\end{corollary}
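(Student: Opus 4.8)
The plan is to reduce everything to the bounded-operator-part criterion \eqref{ord00}, which is available here because both $\dom H_1$ and $\dom H_2$ are assumed closed. The starting observation is that negation behaves well on selfadjoint relations: if $H$ is selfadjoint then so is $-H$, with $\dom(-H)=\dom H$, $\mul(-H)=\mul H$, and orthogonal operator part $(-H)_s=-(H)_s$. In particular, since $\dom H_i$ is closed the operator part $(H_i)_s$ is bounded, hence $-H_i$ is again a selfadjoint relation with closed domain and bounded operator part. Therefore \eqref{ord00} applies verbatim to the ordered pair $(-H_2,-H_1)$ as well as to $(H_1,H_2)$.

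Next I would write out both orderings through \eqref{ord00}. On one hand, $H_1\le H_2$ is equivalent to $\dom H_2\subset\dom H_1$ together with $((H_1)_s h,h)\le((H_2)_s h,h)$ for all $h\in\dom H_2$. On the other hand, $-H_2\le -H_1$ is equivalent to $\dom(-H_1)\subset\dom(-H_2)$ together with $((-H_2)_s h,h)\le((-H_1)_s h,h)$ for all $h\in\dom(-H_1)$; using $(-H_i)_s=-(H_i)_s$ and $\dom(-H_i)=\dom H_i$, this reads $\dom H_1\subset\dom H_2$ together with $((H_1)_s h,h)\le((H_2)_s h,h)$ for all $h\in\dom H_1$. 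Thus the only difference between the two statements lies in the direction of the domain inclusion and in the set over which the quadratic inequality is demanded.

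For the forward implication, if $-H_2\le -H_1$ then the reformulation gives $\dom H_1\subset\dom H_2$, while $H_1\le H_2$ already gives $\dom H_2\subset\dom H_1$ (this inclusion is also recorded in \eqref{incl}); combining yields $\dom H_1=\dom H_2$. Conversely, if $\dom H_1=\dom H_2$, then the domain inclusion needed for $-H_2\le -H_1$ is automatic, and the required quadratic inequality on $\dom H_1=\dom H_2$ is precisely the inequality supplied by $H_1\le H_2$; hence \eqref{ord00} gives $-H_2\le -H_1$. Finally, the equivalence $\dom H_1=\dom H_2\Leftrightarrow\mul H_1=\mul H_2$ follows from the orthogonal decomposition \eqref{deck}: with closed domains one has $\mul H_i=(\dom H_i)^\perp$, so the two equalities are equivalent.

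I expect no serious obstacle here; the only points requiring care are the elementary verifications that $-H$ is selfadjoint with $(-H)_s=-(H)_s$ and that its operator part is bounded, so that \eqref{ord00} is legitimately applicable to the pair $(-H_2,-H_1)$. Once these are in place, the proof is a direct comparison of the two instances of \eqref{ord00}.
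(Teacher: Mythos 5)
Your proof is correct and follows essentially the same route as the paper: the forward implication comes from the domain/multivalued-part inclusions forced by each ordering (the paper invokes \eqref{incl}, you read the same inclusion off \eqref{ord00}), and the converse is a direct application of \eqref{ord00} once the domains coincide. The only point the paper makes slightly more explicitly is that boundedness of the operator parts guarantees $\pm H_1$ and $\pm H_2$ are all bounded from below, so that the ordering $-H_2\le -H_1$ is even defined; your remark that $-H_i$ has bounded operator part covers this.
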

\begin{proof}
By assumption the operator parts $(H_{1})_s$ and $(H_{2})_s$ are
bounded, which guarantees that each of the relations $\pm H_1$ and
$\pm H_2$ is bounded from below. Now, the implication
($\Rightarrow$) is obtained by applying \eqref{incl} to the
inequalities $H_1 \leq H_2$ and $-H_2 \le -H_1$. The implication
($\Leftarrow$) follows directly from \eqref{ord00}.
\end{proof}

Let $H_j$ be a selfadjoint relation in a Hilbert space $\sH$ with
lower bound $m_j$ and let $E_j(\cdot)$ be its spectral function for
$j=1,2$. Then for $x<m_j$,
\[
x\|h\|^2 + \|(H_j-x)^{1/2}_sh\|^2 = \int_\dR s \,d(E_j(s)h,h),
\quad h \in \dom (H_j-x)^{1/2}.
\]
Hence, the selfadjoint relations $H_1$ and $H_2$ satisfy  $H_1 \le
H_2$ if and only if the inclusion \eqref{dom} and the following
inequality are satisfied for any $x< \min\{m_1,m_2\}$:
\begin{equation}\label{compspms+}
\int_\dR s\,d(E_1(s)h,h) \leq \int_\dR s \,d(E_2(s)h,h) \quad \mbox{for all} \quad
h \in \dom (H_2-x)^{1/2}.
\end{equation}

The next lemma will be useful in the proofs of Proposition~\ref{ineqprop2}
and \ref{toegevoegd2} below.

\begin{lemma}\label{hulp}
Let $H_1$ and $H_2$ be selfadjoint relations in a Hilbert space
$\sH$ which are bounded from below and satisfy $H_1\le H_2$. Let
$E_1(\cdot)$ and $E_2(\cdot)$ denote the corresponding spectral
measures. Then the following statements hold:
\begin{itemize}
\item [{\rm (i)}] $\ran E_2((-\infty,0])\cap \ran(I-E_1((-\infty,0)))
\,\subset\, \ker H_1 \cap \ker H_2$;
\item [{\rm (ii)}] $\ran E_2((-\infty,0])\cap \ran(I-E_1((-\infty,0]))=\{0\}$;
\item [{\rm (iii)}] $\ran E_2((-\infty,0))\cap \ran(I-E_1((-\infty,0)))=\{0\}$.
\end{itemize}
\end{lemma}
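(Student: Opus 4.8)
The plan is to prove (i) by a direct quadratic-form computation based on the ordering criterion \eqref{compspms+}, and then to deduce (ii) and (iii) from (i) by exploiting the mutual orthogonality of the spectral projections attached to disjoint Borel sets. Throughout fix $x<\min\{m_1,m_2\}$, so that each $E_j$ is supported in $[m_j,\infty)$, and abbreviate $\st_j[h]=\int_\dR s\,d(E_j(s)h,h)$; by the identity preceding \eqref{compspms+} this integral is finite precisely when $h\in\dom (H_j-x)^{1/2}$.

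For (i), take $h$ in the intersection $\ran E_2((-\infty,0])\cap\ran(I-E_1((-\infty,0)))$. Since $H_2$ is bounded from below, the measure $(E_2(\cdot)h,h)$ is concentrated on the compact interval $[m_2,0]$; hence $h\in\dom (H_2)_s\subset\dom (H_2-x)^{1/2}$ and $\st_2[h]=\int_{[m_2,0]}s\,d(E_2(s)h,h)\le0$, the integrand being nonpositive there. Because $H_1\le H_2$, the inclusion \eqref{dom} yields $h\in\dom (H_1-x)^{1/2}$ and \eqref{compspms+} yields $\st_1[h]\le\st_2[h]\le0$. On the other hand $h\in\ran(I-E_1((-\infty,0)))=\ran E_1([0,\infty))$, so $(E_1(\cdot)h,h)$ is carried by $[0,\infty)$ and $\st_1[h]\ge0$. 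The sandwich $0\le\st_1[h]\le\st_2[h]\le0$ forces $\st_1[h]=\st_2[h]=0$. A vanishing integral of a fixed-sign integrand against a positive measure concentrates that measure on the zero set of the integrand, so $(E_1((0,\infty))h,h)=0$ and $(E_2([m_2,0))h,h)=0$, i.e. $h\in\ran E_1(\{0\})$ and $h\in\ran E_2(\{0\})$. By the decomposition \eqref{null} one has $\ran E_j(\{0\})=\ker (H_j)_s=\ker H_j$, whence $h\in\ker H_1\cap\ker H_2$.

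For (ii) and (iii) I would reduce matters to (i). In (ii), $\ran(I-E_1((-\infty,0]))=\ran E_1((0,\infty))\subset\ran E_1([0,\infty))$, so any $h$ in the relevant intersection already lies in the set treated in (i); thus $h\in\ker H_1=\ran E_1(\{0\})$, while at the same time $h\in\ran E_1((0,\infty))$. Since $E_1(\{0\})E_1((0,\infty))=E_1(\emptyset)=0$, one gets $h=E_1(\{0\})h=E_1(\{0\})E_1((0,\infty))h=0$. In (iii), $\ran E_2((-\infty,0))\subset\ran E_2((-\infty,0])$, so (i) again applies and gives $h\in\ker H_2=\ran E_2(\{0\})$ together with $h\in\ran E_2((-\infty,0))$; the disjointness of $\{0\}$ and $(-\infty,0)$ forces $h=E_2(\{0\})h=E_2(\{0\})E_2((-\infty,0))h=0$.

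The only genuinely delicate point is the bookkeeping in (i): one must verify that membership in $\ran E_2((-\infty,0])$ really places $h$ in the form domain $\dom (H_2-x)^{1/2}$, so that \eqref{dom} and \eqref{compspms+} are applicable, and that the two spectral measures sit on the asserted half-lines. Once the chain of inequalities $0\le\st_1[h]\le\st_2[h]\le0$ has been secured, the remaining conclusions are standard consequences of the behaviour of positive measures and of orthogonality of spectral projections over disjoint sets.
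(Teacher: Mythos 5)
Your proof is correct, and for part (i) it is essentially the paper's argument spelled out in more detail: both rest on applying \eqref{compspms+} to a vector $h$ in the intersection, observing that the left-hand integral is nonnegative while the right-hand one is nonpositive, so that both vanish and both spectral measures concentrate at $\{0\}$, i.e.\ $h\in\ker H_1\cap\ker H_2$. For (ii) and (iii) the paper simply re-runs the same sign argument with one strict inequality (for $h\neq 0$ the left-hand side is strictly positive, respectively the right-hand side strictly negative), whereas you deduce them from (i) together with the orthogonality of spectral projections over disjoint Borel sets; both routes are valid and of comparable length, yours having the small advantage of reusing (i) and the paper's of avoiding the extra projection bookkeeping.
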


\begin{proof}
Note first that since $H_2$ is semibounded,
\[\ran E_2((-\infty,0]) \subset \dom H_2\subset \dom(H_2-x)^{1/2}, \quad x< \min \{m_1,m_2\}.\]
Hence \eqref{compspms+} holds for $h \in \ran E_2((-\infty,0])$.

(i) For $h\in \ran E_2((-\infty,0])\cap \ran (I-E_1((-\infty,0)))$
the righthand side of \eqref{compspms+} is nonpositive
and the lefthand side is nonnegative. Hence,
\begin{equation*}
 \int_\dR s \,d(E_1(s)h,h) = \int_\dR s \,d(E_2(s)h,h)=0
\end{equation*}
and this implies (i).

(ii) Let $h\in \ran E_2((-\infty,0])\cap \ran (I-E_1((-\infty,0]))$. If $h\not= 0$,
the righthand side of \eqref{compspms+} is nonpositive
and the lefthand side is positive. Hence $h=0$ and (ii) holds.

(iii) Let $h\in \ran E_2((-\infty,0))\cap \ran (I-E_1((-\infty,0)))$. If  $h\not=0$,
the righthand side of \eqref{compspms+} is negative
and the lefthand side is nonnegative. Hence $h=0$ and (iii) holds.
\end{proof}

The following result is included as a preparation for Section~\ref{secanti}.

\begin{lemma}\label{s3}
Let $H$ be a selfadjoint relation in a Hilbert space $\sH$ and let
$(\alpha,\beta)$ be a spectral gap of $H$. Then the following
statements hold:
\begin{enumerate}\def\labelenumi {\rm (\roman{enumi})}
\item the relations $(H-\alpha)^{-1}$ and $(H-\beta)^{-1}$ are
selfadjoint with $-(H-\alpha)^{-1}$ and $(H-\beta)^{-1}$ being
bounded from below. They are limits
of $(H-t)^{-1}$ as $t \downarrow \alpha$
and $t \uparrow \beta$ respectively:
\begin{equation*}\label{dd0}
 (H-t)^{-1} \to (H-\alpha)^{-1}, \quad
 (H-t)^{-1} \to (H-\beta)^{-1},
\end{equation*}
where the convergence is in the strong resolvent sense.
Moreover, the inequalities
\begin{equation*}\label{dd1}
- (H-t)^{-1} \le -(H-\alpha)^{-1}\quad\text{and}\quad
 (H-t)^{-1} \le (H-\beta)^{-1}
\end{equation*}
hold  for $\alpha < t < \beta$;

\item if $K_\alpha$ and $K_\beta$ are selfadjoint relations in $\sH$
with $-K_\alpha$ and $K_\beta$  being bounded from below,  such that
$- (H-t)^{-1} \le -K_\alpha$ or $(H-t)^{-1} \le K_\beta$, $\alpha < t < \beta$,
then the limits $(H-\alpha)^{-1}$ and $(H-\beta)^{-1}$ satisfy
\begin{equation*}
-(H-\alpha)^{-1}\leq -K_\alpha\quad\text{or}
\quad (H-\beta)^{-1}\leq K_\beta.
\end{equation*}
\end{enumerate}
\end{lemma}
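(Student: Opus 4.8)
The key observation is that a spectral gap $(\alpha,\beta)$ means precisely $E((\alpha,\beta))=0$, so the spectral measure of $H$ is concentrated on $(-\infty,\alpha]\cup[\beta,\infty)$, and every relation in the statement can be read off from the functional calculus $\varphi(H)=\varphi(H_s)\hoplus(\{0\}\times\mul H)$ applied to the explicit functions $\varphi_t(s)=-1/(s-t)$ and $\psi_t(s)=1/(s-t)$. First I would record that $(H-\alpha)^{-1}$ and $(H-\beta)^{-1}$ are selfadjoint, since $H-\alpha$ and $H-\beta$ are selfadjoint and the inverse of a selfadjoint relation is selfadjoint; here $\ker(H-\alpha)$ and $\ker(H-\beta)$ become the respective multivalued parts. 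Evaluating the functions on the support of $E$ gives $-1/(s-\alpha)\ge -1/(\beta-\alpha)$ and $1/(s-\beta)\ge -1/(\beta-\alpha)$ throughout, so that $-(H-\alpha)^{-1}$ and $(H-\beta)^{-1}$ are bounded from below by $-1/(\beta-\alpha)$.

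For the strong resolvent convergence in (i), I would fix $\lambda\in\cmr$ and use the composition rule for the functional calculus to write $((H-t)^{-1}-\lambda)^{-1}=R_{t}(H)$ with $R_{t}(s)=(s-t)/(1-\lambda(s-t))$, and likewise $((H-\alpha)^{-1}-\lambda)^{-1}=R_{\alpha}(H)$; the value $R_t(\alpha)\to 0=R_\alpha(\alpha)$ accounts correctly for the eigenspace at $\alpha$. Since $|R_t(s)|\le 1/|\IM\lambda|$ uniformly in $t$ and $R_t(s)\to R_\alpha(s)$ pointwise as $t\downarrow\alpha$, dominated convergence in $\int_\dR|R_t(s)-R_\alpha(s)|^2\,d(E(s)h,h)$ yields $R_t(H)h\to R_\alpha(H)h$, i.e.\ strong resolvent convergence; the case $t\uparrow\beta$ is identical. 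The two monotonicity inequalities then reduce to the pointwise comparisons $\varphi_t\le\varphi_\alpha$ and $\psi_t\le\psi_\beta$ on $\supp E$ (which I would verify separately on $(-\infty,\alpha]$ and on $[\beta,\infty)$), combined with the general principle---immediate from \eqref{compspms+} and Proposition~\ref{oldh}---that a pointwise inequality between the defining functions of two relations that are bounded from below implies the corresponding ordering.

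For part (ii) I would pass the defining resolvent inequality \eqref{ord01h} to the limit. Consider the case $-(H-t)^{-1}\le -K_\alpha$. Restricting to $t$ in a half-neighbourhood $(\alpha,(\alpha+\beta)/2)$ gives the uniform lower bound $-(H-t)^{-1}\ge -2/(\beta-\alpha)$, so one may fix a single real $x$ below the lower bounds of $-K_\alpha$ and of all these relations; then $x$ lies in a common resolvent set and the operators $(-(H-t)^{-1}-x)^{-1}$ are uniformly bounded. The strong resolvent convergence from (i) then extends to the real point $x$, so that $((-(H-t)^{-1}-x)^{-1}h,h)\to((-(H-\alpha)^{-1}-x)^{-1}h,h)$, and taking the limit in $0\le((-K_\alpha-x)^{-1}h,h)\le((-(H-t)^{-1}-x)^{-1}h,h)$ produces exactly the inequality $-(H-\alpha)^{-1}\le -K_\alpha$; the assertion for $K_\beta$ is proved in the same way.

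The main obstacle I anticipate is this last limiting step: the ordering in this paper is defined through resolvents at a point below the lower bound, whereas strong resolvent convergence is a priori only guaranteed at nonreal $\lambda$, so the crux is to upgrade it to the real point $x$ entering \eqref{ord01h}. This is exactly where the uniform lower bound near the endpoint---and hence the uniform boundedness of the resolvents---is indispensable. A secondary point requiring care is the bookkeeping of the eigenspaces $\ker(H-\alpha)$ and $\ker(H-\beta)$ as multivalued parts, which is precisely what makes the boundary values $R_t(\alpha)\to 0$ and the function comparisons at $s=\alpha,\beta$ consistent with the relation-theoretic statements.
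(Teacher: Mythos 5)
Your argument is correct, but it follows a genuinely different route from the paper's. The paper first shows that $t\mapsto (H-t)^{-1}$ is nondecreasing on $(\alpha,\beta)$ and then invokes the general convergence theorem for monotone families of semibounded selfadjoint relations \cite[Theorem 3.5]{BHSW1} to produce a selfadjoint limit $B$ with $(H-t)^{-1}\le B$, identifies $B=(H-\beta)^{-1}$ by a graph-limit computation, and proves (ii) through the form-domain characterization $\dom(B-m_c)^{1/2}=\sH_0$ supplied by that same theorem together with Proposition~\ref{oldh}. You instead exploit the fact that every object in the lemma is a function of the single relation $H$: selfadjointness and the lower bounds come from evaluating $\mp 1/(s-\cdot)$ on $\supp E\subset\dR\setminus(\alpha,\beta)$, the convergence comes from dominated convergence for the resolvent functions $R_t$, the inequalities in (i) come from pointwise comparison of the spectral functions, and (ii) is obtained by passing the defining inequality \eqref{ord01h} to the limit at a real point $x$, using the uniform lower bound near the endpoint to justify convergence of the resolvents there. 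This makes monotonicity of $t\mapsto(H-t)^{-1}$ and the external limit theorem unnecessary, at the price of being tied to the special structure at hand (all relations commute, being functions of $H$), whereas the paper's argument is the one that generalizes to arbitrary monotone families. Two small points you should still write out: the ``general principle'' that a pointwise inequality $f\le g$ on $\supp E$ (with $+\infty$ encoding multivalued parts) implies $f(H)\le g(H)$ is not stated in the paper and needs the two-line verification via \eqref{compspms+} and Proposition~\ref{oldh}; and the functional calculus must be applied with the convention that on $\mul H$ the resolvents $((H-t)^{-1}-\lambda)^{-1}$ act as multiplication by $-1/\lambda$ rather than via the relation-valued convention $\varphi(H)=\varphi(H_s)\hoplus(\{0\}\times\mul H)$ used elsewhere in the paper. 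Neither point is a gap; both are routine.
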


\begin{proof}
The statements are proved for the right endpoint $\beta$;
a similar reasoning applies to the left endpoint $\alpha$. Note first that
if $E(\cdot)$ is the spectral function of $H$, then \eqref{reso} shows that for all
$t_1,t_2\in(\alpha,\beta)$ with $t_1\le t_2$ and all $h\in \sH$,
\[
 ((H-t_2)^{-1}h,h)-((H-t_1)^{-1}h,h)
 =\int_{\dR\setminus (\alpha,\beta)} \frac{t_2-t_1}{(s-t_1)(s-t_2)}\, d(E(s)h,h).
\]
The support of the measure $d(E(\cdot)h,h)$ is contained in
$\dR\setminus (\alpha,\beta)$ and there the integrand is nonnegative.
Hence, the operator function $(H-t)^{-1}$
is nondecreasing in $t \in (\alpha, \beta)$.

(i) Fix some $c\in(\alpha,\beta)$ and let $m_c$ be a lower bound
for the bounded operator $(H-c)^{-1}$. As the function
$t\mapsto (H-t)^{-1}$ is nondecreasing in $(\alpha,\beta)$
it follows that $m_c$ is a lower bound for $(H-t)^{-1}$, $t\in(c,\beta)$.
Hence by \cite[Theorem 3.5]{BHSW1} there exists
a selfadjoint relation $B$ in $\sH$, bounded
from below by $m_c$, such that
$(H-t)^{-1}\rightarrow B$ as $t\uparrow\beta$
in the strong resolvent sense, or, equivalently,
in the graph sense; cf. \cite[Proposition 2.3]{BHSW1} and \cite{RS1}.
Moreover, $(H-t)^{-1}\leq B$ holds for all $t\in(c,\beta)$.

Hence, to prove (i) it suffices to verify $B=(H-\beta)^{-1}$. For this let $\{\phi, \psi\} \in B$.
Since $B$ is the graph limit of $(H-t)^{-1}$ there exist
$\{\phi_t, \psi_t\} \in (H-t)^{-1}$ with $\{\phi_t,\psi_t\} \to
\{\phi,\psi\}$ as $t \uparrow \beta$.  Since
\[
\{\psi_t, \phi_t+(t-\beta) \psi_t\} \in H-\beta \quad \mbox{and} \quad
\{\phi_t+(t-\beta) \psi_t, \psi_t\} \in (H-\beta)^{-1},
\]
it follows that $\{\phi, \psi\} \in (H-\beta)^{-1}$, i.e., $B\subset
(H-\beta)^{-1}$. Since both $B$ and $(H-\beta)^{-1}$  are
selfadjoint, the equality $B=(H-\beta)^{-1}$ follows.

(ii) Since $B=(H-\beta)^{-1}$ is bounded from below by $m_c$ the relation
$(H-\beta)^{-1}-m_c$ is nonnegative.
Recall that $\dom ((H-\beta)^{-1} -m_c)^{1/2}= \sH_0$, where
\[
\sH_0=\{ h \in \sH : \lim_{t \uparrow \beta} \|((H-t)^{-1} -m_c) ^{1/2}h\|< \infty\};
\]
cf.  \cite[Theorem 3.5]{BHSW1}. Now let $K_\beta$ be such that $(H-t)^{-1} \le K_\beta$, then by Proposition~\ref{oldh}
for all $t \in (c, \beta)$
\[
\dom (K_\beta-m_c)^{1/2} \subset \dom ((H-t)^{-1} -m_c) ^{1/2}
\]
and
\[
\|((H-t)^{-1} -m_c) ^{1/2}h\| \le \|(K_\beta -m_c)_s^{1/2}h\| \quad \textrm{for all}
\quad  h \in \dom (K_\beta-m_c)^{1/2}.
\]
Since $(H-t)^{-1}$ is a nondecreasing operator function on
$(\alpha,\beta)$, the preceding inequality implies that
\[
\dom (K_\beta -m_c)^{1/2} \subset \sH_0=\dom ((H-\beta)^{-1} -m_c)^{1/2}.
\]
Hence Proposition~\ref{oldh}
yields $(H-\beta)^{-1} \le K_\beta$.
\end{proof}

\subsection{Inertia of selfadjoint relations}

The notion of inertia of a selfadjoint relation in a Hilbert space
is defined by means of its associated spectral measure. In what
follows the Hilbert space is assumed to be separable.

\begin{definition}
Let $H$ be a selfadjoint relation in a separable Hilbert space $\sH$
and let $E(\cdot)$ be the spectral measure of $H$. The inertia of
$H$ is defined as the ordered quadruplet
$\sfi(H)=\bigl\{\sfi^+(H),\sfi^-(H),\sfi^0(H),\sfi^\infty(H)\bigr\}$,
where
\begin{equation*}
\begin{split}
\sfi^+(H)&=\dim \ran E((0,\infty)),\qquad \sfi^-(H)=\dim \ran
E((-\infty,0)),\\
\sfi^0(H)&=\dim\ker H,\qquad\qquad\quad\, \sfi^\infty(H)=\dim \mul H.
\end{split}
\end{equation*}
\end{definition}
In particular, for a selfadjoint relation $H$ in $\dC^n$, the
quadruplet $\sfi(H)$ consists of the numbers of positive, negative,
zero, and infinite eigenvalues of $H$; cf. \cite{BHSW2}. Hence, if
$H$ is a selfadjoint matrix in $\dC^n$, then ${\sf i}^\infty(H)=0$
and the remaining numbers make up the usual inertia of $H$, see,
e.g. \cite{HJ,LT} or the introduction.

The inertia numbers of a selfadjoint relation $H$ in a separable Hilbert space
$\sH$ satisfy:
\begin{equation}\label{nn}
\sfi^+(H)+\sfi^-(H)+\sfi^0(H)+\sfi^\infty(H)= \dim \sH.
\end{equation}
Furthermore, the following identities hold:
\begin{equation}\label{coninnum1}
\begin{split}
\sfi(H^{-1})&=\bigl\{\sfi^+(H),\sfi^-(H),\sfi^\infty(H),\sfi^0(H)\bigr\};\\
\sfi(-H^{-1})&=\bigl\{\sfi^-(H),\sfi^+(H),\sfi^\infty(H),\sfi^0(H)\bigr\}.\
\end{split}
\end{equation}

The next proposition shows that the ordering of two selfadjoint
relations in a separable Hilbert space implies certain inequalities
between their inertia numbers; cf. \cite[Proposition 3.6]{BHSW2} for
a finite-dimensional variant of Proposition~\ref{ineqprop2}.

\begin{proposition}\label{ineqprop2}
Let $H_1$ and $H_2$ be selfadjoint relations in a separable Hilbert
space $\sH$ which are bounded from below and satisfy $H_1\le
H_2$. Then their inertia
$\sfi(H_j)=\{\sfi^+_j,\sfi^-_j,\sfi^0_j,\sfi^\infty_j\}$, $j=1,2$,
satisfy the following inequalities:
\begin{itemize}
 \item [{\rm (i)}] ${\sf i}_1^\infty  \leq {\sf i}_2^\infty$ and ${\sf
i}_1^-+{\sf i}_1^0+{\sf i}_1^+  \geq {\sf i}_2^- + {\sf i}_2^0+ {\sf
i}_2^+$;
 \item [{\rm (ii)}] ${\sf i}_1^-  \geq {\sf i}_2^-$ and  ${\sf i}_1^-
+{\sf i}_1^0   \geq {\sf i}_2^-+{\sf i}_2^0$;
 \item [{\rm (iii)}] ${\sf i}_1^+ +{\sf i}_1^\infty   \leq {\sf i}_2^+
+{\sf i}_2^\infty$ and ${\sf i}_1^0 +{\sf i}_1^+ +{\sf i}_1^\infty
\leq {\sf i}_2^0 + {\sf i}_2^+ + {\sf i}_2^\infty$.
\end{itemize}
\end{proposition}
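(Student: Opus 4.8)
The plan is to reduce every inequality to a comparison of Hilbert-space dimensions of suitable spectral subspaces of $H_1$ and $H_2$. Writing $E_j(\cdot)$ for the spectral measure of $H_j$, recall that $E_j(\dR)$ is the orthogonal projection onto $\cdom H_j$, so that $I-E_j(\dR)$ is the projection onto $\mul H_j$. Keeping track of how the multivalued part enters, one obtains the bookkeeping
\begin{align*}
\dim \ran E_j((-\infty,0)) &= \sfi^-_j, &
\dim \ran E_j((-\infty,0]) &= \sfi^-_j + \sfi^0_j, \\
\dim \ran (I - E_j((-\infty,0])) &= \sfi^+_j + \sfi^\infty_j, &
\dim \ran (I - E_j((-\infty,0))) &= \sfi^0_j + \sfi^+_j + \sfi^\infty_j,
\end{align*}
where the last two identities use $\mul H_j = \ran(I-E_j(\dR))$ together with $\dim\cdom H_j = \sfi^-_j+\sfi^0_j+\sfi^+_j$.

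The second ingredient is an elementary dimension principle: if $P$ and $Q$ are orthogonal projections in $\sH$ with $\ran P \cap \ran(I-Q)=\{0\}$, then, because $\ran(I-Q)=\ker Q$, the compression $Q|_{\ran P}$ is an injective bounded map of $\ran P$ into $\ran Q$, whence $\dim \ran P \le \dim\ran Q$. Since $\sH$ is separable, each such dimension is finite or equal to $\aleph_0$; if $\ran Q$ is infinite-dimensional the bound is trivial, and if $\ran Q$ is finite-dimensional then injectivity forces $\ran P$ to be finite-dimensional of no larger dimension. Thus the principle is valid in both the finite- and the infinite-dimensional settings.

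With these preparations each inequality is a direct match of Lemma~\ref{hulp} against the dimension principle. For (i) I would invoke \eqref{incl} directly: $\mul H_1\subset\mul H_2$ gives $\sfi^\infty_1\le\sfi^\infty_2$, while $\cdom H_2\subset\cdom H_1$ gives $\sfi^-_2+\sfi^0_2+\sfi^+_2\le\sfi^-_1+\sfi^0_1+\sfi^+_1$. For (ii), apply the principle with $P=E_2((-\infty,0))$, $Q=E_1((-\infty,0))$, where $\ran P\cap\ran(I-Q)=\{0\}$ is Lemma~\ref{hulp}(iii), yielding $\sfi^-_2\le\sfi^-_1$; and with $P=E_2((-\infty,0])$, $Q=E_1((-\infty,0])$, where the intersection condition is Lemma~\ref{hulp}(ii), yielding $\sfi^-_2+\sfi^0_2\le\sfi^-_1+\sfi^0_1$. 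For (iii) I reverse the roles: taking $P=I-E_1((-\infty,0])$ and $Q=I-E_2((-\infty,0])$, the condition $\ran P\cap\ran(I-Q)=\ran(I-E_1((-\infty,0]))\cap\ran E_2((-\infty,0])=\{0\}$ is again Lemma~\ref{hulp}(ii) and gives $\sfi^+_1+\sfi^\infty_1\le\sfi^+_2+\sfi^\infty_2$; analogously $P=I-E_1((-\infty,0))$, $Q=I-E_2((-\infty,0))$ with Lemma~\ref{hulp}(iii) gives $\sfi^0_1+\sfi^+_1+\sfi^\infty_1\le\sfi^0_2+\sfi^+_2+\sfi^\infty_2$.

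The step requiring the most care is the first one: the correct identification of the spectral subspaces, in particular tracking how $\mul H_j$ is recovered from $I-E_j(\dR)$ and ensuring that the half-open versus open intervals in Lemma~\ref{hulp} are matched to the intended inertia sums. It is worth emphasizing why this spectral approach is necessary rather than merely convenient: in finite dimensions the three statements are interdependent through the counting identity \eqref{nn}, but for infinite cardinals that identity can no longer be solved by subtraction, so each inequality must be established directly; the only genuinely infinite-dimensional subtlety then is the validity of the dimension principle, handled via separability as above.
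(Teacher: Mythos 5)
Your proposal is correct and follows essentially the same route as the paper: part (i) from \eqref{incl}, and parts (ii) and (iii) by combining Lemma~\ref{hulp} with the injectivity of one spectral projection restricted to the range of another, with the same matching of open versus half-open intervals to the inertia sums. The only difference is presentational: you package the repeated argument into a single projection ``dimension principle,'' while the paper spells it out case by case via finite-dimensional subspaces.
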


\begin{proof}
(i) This is a direct consequence of the implication in \eqref{incl}.

(ii) If $\textsf{i}_1^-=\infty$, then automatically ${\sf i}_2^-  \leq {\sf i}_1^-$.
Hence, in order to show  ${\sf i}_2^- \leq {\sf i}_1^-$,
assume that $\textsf{i}_1^-< \infty$ and
let $\cL$ be a finite-dimensional subspace in $\ran E_2((-\infty,0))$.
Since $E_1((-\infty,0))$ restricted to $\cL$ is injective by Lemma~\ref{hulp}~(iii),
one has
\begin{equation*}
\dim \cL = \dim E_1((-\infty,0))\sL \leq \dim\ran
E_1((-\infty,0))=\textsf{i}_1^-.
\end{equation*}
Thus any finite-dimensional subspace of $\ran E_2((-\infty,0))$
has dimension at most $\textsf{i}_1^-$, which implies that the space
$\ran E_2((-\infty,0))$ itself has dimension at most $\textsf{i}_1^-$, i.e.
${\sf i}_2^-  \leq {\sf i}_1^-$.

The inequality ${\sf i}_1^- +{\sf i}_1^0   \geq {\sf i}_2^-+{\sf i}_2^0$
can be shown in a similar way, when (ii) in Lemma~\ref{hulp} is used
instead of (iii).

(iii)  By Lemma~\ref{hulp}~(ii) the identity
\begin{equation}\label{empty2}
\ran E_2((-\infty,0])\cap (\ran E_1((0,\infty)) \oplus \mul H_1)=\{0\}
\end{equation}
holds.
If ${\sf i}_2^+ +{\sf i}_2^\infty=\infty$, then automatically
${\sf i}_1^+ +{\sf i}_1^\infty   \leq {\sf i}_2^+ +{\sf i}_2^\infty$.
Hence, in order to show ${\sf i}_1^+ +{\sf i}_1^\infty   \leq {\sf i}_2^+ +{\sf i}_2^\infty$,
assume that ${\sf i}_2^+ +{\sf i}_2^\infty < \infty$ and let $\cL$ be a finite-dimensional subspace in
$\ran E_1((0,\infty)) \oplus \mul H_1$.
Since $I-E_2((-\infty,0])$ restricted to $\sL$ is injective by \eqref{empty2}, one has
\begin{equation*}
\dim \cL = \dim \bigl(I-E_2((-\infty,0])\bigr)\sL \le \dim \ran \bigl(I-E_2((-\infty,0])\bigr)
={\sf i}_2^+ +{\sf i}_2^\infty.
\end{equation*}
Thus any finite-dimensional subspace of $\ran E_1((0,\infty)) \oplus \mul H_1$
has dimension at most ${\sf i}_2^+ +{\sf i}_2^\infty$, which implies that the space
$\ran E_1((0,\infty)) \oplus \mul H_1$ itself has dimension at most ${\sf i}_2^+ +{\sf i}_2^\infty$,
i.e.,  ${\sf i}_1^+ +{\sf i}_1^\infty   \leq {\sf i}_2^+ +{\sf i}_2^\infty$.

The inequality ${\sf i}_1^0 +{\sf i}_1^+ +{\sf i}_1^\infty \leq {\sf i}_2^- +{\sf i}_2^+ +{\sf
i}_2^\infty$
can be shown in a similar way, when (iii) in Lemma~\ref{hulp} is used
instead of (ii).
\end{proof}

The case of equality in an inertia inequality of Proposition~\ref{ineqprop2}
has a specific geometric implication.

\begin{proposition}\label{toegevoegd2}
Let $H_1$ and $H_2$ be selfadjoint relations in a separable
Hilbert space $\sH$ which are bounded from below. Let
$\sfi(H_j)=\{\sfi^+_j,\sfi^-_j,\sfi^0_j,\sfi^\infty_j\}$ be the
inertia of $H_j$, $j=1,2$, and assume that $H_1\le H_2$. Then
the following statements hold:

\begin{enumerate}\def\labelenumi {\rm (\roman{enumi})}
\item if ${\sf i}_1^\infty = {\sf i}_2^\infty< \infty$, then $\mul H_1 =
\mul H_2$;
\item if ${\sf i}_1^- +{\sf i}_1^0 = {\sf i}_2^-+{\sf i}_2^0<\infty$,
then $\ker H_1 \subset \ker H_2$;
\item if ${\sf i}_1^- = {\sf i}_2^-<\infty$, then $\ker H_2 \subset \ker
H_1$.
\end{enumerate}
In particular, if ${\sf i}_1^- = {\sf i}_2^-<\infty$ and ${\sf i}_1^0 =
{\sf i}_2^0<\infty$, then $\ker H_1 = \ker H_2$.
\end{proposition}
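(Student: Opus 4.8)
The plan is to obtain (i) directly from the implication \eqref{incl}, to establish (ii) and (iii) by the dimension-counting scheme already used in Proposition~\ref{ineqprop2}, and to deduce the final assertion by combining the last two. Throughout I use that $\dim \ran E_j((-\infty,0)) = \sfi_j^-$, that $\dim \ran E_j((-\infty,0]) = \sfi_j^- + \sfi_j^0$, that $\ker H_j = \ran E_j(\{0\})$, and that $\dim \mul H_j = \sfi_j^\infty$. The guiding idea is that Lemma~\ref{hulp} encodes \emph{injectivity} of a spectral projection of $H_1$ on a spectral subspace of $H_2$, and that an injection between finite-dimensional spaces of equal dimension is automatically a bijection; the equality hypotheses are precisely what promotes these injections to bijections. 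For (i) there is nothing to count: \eqref{incl} already gives $\mul H_1 \subset \mul H_2$, and two nested subspaces of equal finite dimension coincide.

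For (iii), Lemma~\ref{hulp}(iii) says that $E_1((-\infty,0))$ is injective on $\ran E_2((-\infty,0))$; as $\dim \ran E_2((-\infty,0)) = \sfi_2^- = \sfi_1^- = \dim \ran E_1((-\infty,0)) < \infty$, it restricts to a bijection of $\ran E_2((-\infty,0))$ onto $\ran E_1((-\infty,0))$. Given $h \in \ker H_2$, I would use surjectivity to pick $g \in \ran E_2((-\infty,0))$ with $E_1((-\infty,0))g = E_1((-\infty,0))h$, so that $h-g \in \ran(I - E_1((-\infty,0)))$; since $h,g \in \ran E_2((-\infty,0])$ as well, Lemma~\ref{hulp}(i) places $h-g$ in $\ker H_1 \cap \ker H_2$. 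In particular $h-g \in \ran E_2(\{0\})$, whence $g = h-(h-g) \in \ran E_2(\{0\})$; but $g \in \ran E_2((-\infty,0))$ is orthogonal to $\ran E_2(\{0\})$, forcing $g=0$ and hence $h = h-g \in \ker H_1$. For (ii) the same argument runs with the closed half-line: Lemma~\ref{hulp}(ii) makes $E_1((-\infty,0])$ injective on $\ran E_2((-\infty,0])$, hence (since $\sfi_1^- + \sfi_1^0 = \sfi_2^- + \sfi_2^0 < \infty$) a bijection onto $\ran E_1((-\infty,0])$. Given $h \in \ker H_1 \subset \ran E_1((-\infty,0])$, I would take the preimage $g \in \ran E_2((-\infty,0])$ with $E_1((-\infty,0])g = h$; splitting $E_1((-\infty,0])g$ into its $\ran E_1((-\infty,0))$ and $\ran E_1(\{0\})$ parts and comparing with $h \in \ran E_1(\{0\})$ forces $E_1((-\infty,0))g = 0$, so $g \in \ran(I-E_1((-\infty,0))) \cap \ran E_2((-\infty,0])$, which lies in $\ker H_1 \cap \ker H_2$ by Lemma~\ref{hulp}(i). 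Then $g \in \ker H_1$ gives $E_1(\{0\})g = g$, while $E_1(\{0\})g = h$, so $h = g \in \ker H_2$. Finally, when $\sfi_1^- = \sfi_2^- < \infty$ and $\sfi_1^0 = \sfi_2^0 < \infty$ one also has $\sfi_1^- + \sfi_1^0 = \sfi_2^- + \sfi_2^0 < \infty$, so (ii) gives $\ker H_1 \subset \ker H_2$ and (iii) gives $\ker H_2 \subset \ker H_1$, whence $\ker H_1 = \ker H_2$.

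I expect the principal difficulty to be running the bijection in the correct direction in the two non-symmetric cases (ii) and (iii). The asymmetry originates in Lemma~\ref{hulp}, where $E_2$ always sits on the closed half-line while $E_1$ sits on the open one in (i) and (iii) but on the closed one in (ii); matching this against the target inclusion ($\ker H_1 \subset \ker H_2$ versus $\ker H_2 \subset \ker H_1$) dictates whether one applies the bijection or its inverse and which orthogonal splitting to read off. Once the correct vector is fed into Lemma~\ref{hulp}(i), the concluding steps ($g=0$ in (iii) and $g=h$ in (ii)) are routine consequences of the orthogonality of distinct spectral subspaces.
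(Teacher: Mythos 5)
Your proof is correct, and it rests on the same two pillars as the paper's own argument---Lemma~\ref{hulp} together with the pigeonhole fact that an injective linear map between finite-dimensional spaces of equal dimension is onto---but the execution is genuinely different. The paper introduces the single subspace $\sL_0=\ran E_2((-\infty,0])\cap\ran(I-E_1((-\infty,0)))$, notes via Lemma~\ref{hulp}(i) that $\sL_0\subset\ker H_1\cap\ker H_2$, and then makes one global dimension estimate, $\dim\sL_0\ge \sfi_2^-+\sfi_2^0-\sfi_1^-$, which under the hypothesis of (ii) forces $\sL_0=\ker H_1$ and under the hypothesis of (iii) forces $\sL_0=\ker H_2$. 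You instead upgrade the injectivity statements of Lemma~\ref{hulp}(ii) and (iii) to bijections between the relevant spectral subspaces and chase individual kernel vectors; note that your corrected vectors ($h-g$ in case (iii), $g$ in case (ii)) land precisely in the paper's $\sL_0$, so the two arguments are two views of the same mechanism. What your element-wise version buys: in case (iii) you never need $\ker H_2$ to be finite-dimensional, whereas the paper's literal comparison (``$\dim\sL_0\ge\sfi_2^0=\dim\ker H_2$ and $\sL_0\subset\ker H_2$, hence equality'') only yields equality directly when $\sfi_2^0<\infty$; the paper's step is easily repaired by comparing codimensions inside $\ran E_2((-\infty,0])$, but your route sidesteps the issue entirely. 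What the paper's version buys is economy: one subspace and one dimension count handle (ii) and (iii) simultaneously. Your treatments of (i) and of the final assertion coincide with the paper's.
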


\begin{proof}
(i) This is a direct consequence of \eqref{incl}.

(ii) $\&$ (iii) Define the subspace $\sL_0=\ran E_2((-\infty,0])\cap
\ran (I-E_1((-\infty,0)))$. According to Lemma~\ref{hulp}~(i) $\sL_0
\subset \ker H_1\cap\ker H_2$. Furthermore, note that $\sL_0$ can be
rewritten as
\begin{equation*}\label{orth}
\sL_0=\ran E_2((-\infty,0])\cap \bigl(\ran E_1((-\infty,0))\bigr)^\perp.
\end{equation*}
Since $\dim \ran E_2((-\infty,0])=\textsf{i}_2^-+\textsf{i}_2^0$ and
$\dim \ran E_1((-\infty,0))=\textsf{i}_1^-<\infty$,
\begin{equation}\label{dimrr}
 \dim \sL_0   \ge \textsf{i}_2^-+\textsf{i}_2^0-\textsf{i}_1^-.
\end{equation}

In case (ii), the assumption together with \eqref{dimrr} implies that $\dim \sL_0 \geq \textsf{i}_1^0 = \dim \ker H_1$. Combining this observation
with the inclusion $\sL_0 \subset \ker H_1\cap\ker H_2 \subset \ker H_1$ yields that $\ker H_1 \cap \ker H_2=\ker H_1$
and, hence, that $\ker H_1 \subset \ker H_2$.

In case (iii), the assumption together with \eqref{dimrr} implies that $\dim \sL_0 \geq \textsf{i}_2^0 = \dim \ker H_2$. Combining this observation
with the inclusion $\sL_0 \subset \ker H_1\cap\ker H_2 \subset \ker H_2$ yields that $\ker H_1 \cap \ker H_2=\ker H_2$
and, hence, that $\ker H_2 \subset \ker H_1$.
\end{proof}

\section{Antitonicity for selfadjoint relations}\label{secanti}

The infinite-dimensional variants of the antitonicity theorems from the introduction are here proved
by means of perturbation arguments, the spectral mapping result \eqref{Tra}, and  limit properties of monotone operator functions.
Furthermore, various consequences and special cases of these results are also discussed.

\subsection{An antitonicity theorem for bounded and boundedly invertible operators}

The following theorem is the infinite-dimensional variant of Theorem
\ref{antith} from the introduction; it was proved independently in
\cite{DM91b, S, HaNo0}; cf. \cite{HaNo}. A simple proof is included here;
it relies on the main arguments used in \cite{HaNo0,HaNo}.

\begin{theorem}\label{antith2}
Let $H_1$ and $H_2$ be bounded and boundedly invertible
selfadjoint operators in a separable Hilbert space $\sH$.
Let $\sfi(H_j)=\{\sfi^+_j,\sfi^-_j,\sfi^0_j,\sfi^\infty_j\}$ be
the inertia of $H_j$, $j=1,2$, and assume that
$\min\{\sfi^+_2,\sfi^-_1\}<\infty$ and that $H_1\leq H_2$.
Then
\[
H_2^{-1} \leq H_1^{-1} \quad \textrm{if and only if}
\quad {\sf i}(H_1) = {\sf i}(H_2).
\]
\end{theorem}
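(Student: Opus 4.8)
The plan is to treat the two implications separately, deriving the forward direction directly from the inertia inequalities already available and reserving the perturbation machinery for the converse.

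For the implication $H_2^{-1}\le H_1^{-1}\Rightarrow\sfi(H_1)=\sfi(H_2)$ I would argue as follows. Since $H_1$ and $H_2$ are bounded and boundedly invertible, $\sfi^0_j=\sfi^\infty_j=0$, and by \eqref{coninnum1} one has $\sfi^\pm(H_j^{-1})=\sfi^\pm_j$. Applying Proposition~\ref{ineqprop2}(ii)--(iii) to $H_1\le H_2$ gives $\sfi^-_2\le\sfi^-_1$ and $\sfi^+_1\le\sfi^+_2$, while applying it to $H_2^{-1}\le H_1^{-1}$ gives the reversed inequalities $\sfi^-_1\le\sfi^-_2$ and $\sfi^+_2\le\sfi^+_1$. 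Sandwiching yields $\sfi^-_1=\sfi^-_2$ and $\sfi^+_1=\sfi^+_2$, hence $\sfi(H_1)=\sfi(H_2)$; no finiteness assumption is needed here.

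For the converse assume $\sfi(H_1)=\sfi(H_2)$, set $D=H_2-H_1\ge 0$, and interpolate by $H_t=H_1+tD=(1-t)H_1+tH_2$ for $t\in[0,1]$, so that $H_1\le H_s\le H_t\le H_2$ for $s\le t$ and $t\mapsto H_t$ is norm-continuous. Since $\sfi(H_1)=\sfi(H_2)$ and $\min\{\sfi^+_2,\sfi^-_1\}<\infty$, either $\sfi^-_1<\infty$ or $\sfi^+_1<\infty$; in the second case I replace $(H_1,H_2)$ by $(-H_2,-H_1)$, for which the negative inertia of the first operator is $\sfi^+_2=\sfi^+_1<\infty$ and the desired inequality becomes the equivalent $(-H_1)^{-1}\le(-H_2)^{-1}$. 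Hence I may assume $\sfi^-_1<\infty$. Proposition~\ref{ineqprop2}(ii) applied to $H_1\le H_t$ and to $H_t\le H_2$ then sandwiches $\sfi^-(H_t)$ between $\sfi^-_2$ and $\sfi^-_1$, so $\sfi^-(H_t)=\sfi^-_1$ is constant along the path.

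The crucial step is to show $0\in\rho(H_t)$ for every $t\in[0,1]$. Because $\sfi^-(H_t)=\sfi^-_1<\infty$, the projection $E_t((-\infty,0))$ has finite rank, so the negative spectrum of $H_t$ consists of finitely many eigenvalues and $\sigma_{\mathrm{ess}}(H_t)\subset[0,\infty)$. The bottom of the essential spectrum is monotone under the ordering (via the min--max characterization), so $\inf\sigma_{\mathrm{ess}}(H_t)\ge\inf\sigma_{\mathrm{ess}}(H_1)$, and the latter is strictly positive since $0\in\rho(H_1)$ while the negative part of $H_1$ is non-essential; thus the essential spectrum stays bounded away from $0$. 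To exclude $0$ as an eigenvalue, Proposition~\ref{ineqprop2}(ii) applied to $H_1\le H_t$ gives $\sfi^-_1=\sfi^-_1+\sfi^0(H_1)\ge\sfi^-(H_t)+\sfi^0(H_t)=\sfi^-_1+\dim\ker H_t$, forcing $\ker H_t=\{0\}$. Together these give $0\in\rho(H_t)$, and then $t\mapsto H_t^{-1}$ is norm-differentiable with $\frac{d}{dt}H_t^{-1}=-H_t^{-1}DH_t^{-1}$, whence $\frac{d}{dt}(H_t^{-1}h,h)=-(DH_t^{-1}h,H_t^{-1}h)\le 0$ for each $h\in\sH$; integrating gives $(H_2^{-1}h,h)\le(H_1^{-1}h,h)$, i.e. $H_2^{-1}\le H_1^{-1}$. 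I expect the main obstacle to be precisely the verification that $0\in\rho(H_t)$ for all intermediate $t$ — in particular ruling out that $0$ is absorbed into the essential spectrum — which is exactly where the finiteness of $\min\{\sfi^+_2,\sfi^-_1\}$ is indispensable.
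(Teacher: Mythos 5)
Your proof is correct, but the sufficiency direction follows a genuinely different route from the paper's. The necessity direction is the same in both: apply Proposition~\ref{ineqprop2} together with \eqref{coninnum1} to the two inequalities $H_1\le H_2$ and $H_2^{-1}\le H_1^{-1}$, and indeed no finiteness is needed there. For sufficiency, both arguments first reduce to the case $\sfi^-_1<\infty$ by passing to $-H_2\le -H_1$, but from that point on they diverge. The paper is algebraic: it invokes K\"othe's inertia representation $H_j=V_j^*JV_j$ with $J=I_{\sfi_1^+}\oplus -I_{\sfi_1^-}$, rewrites $H_1\le H_2$ as $J-U^*JU\ge 0$ with $U=V_1V_2^{-1}$, and uses a $2\times 2$ congruence identity plus invariance of inertia under congruence to conclude $\sfi^-(J-UJU^*)=\sfi^-(J-U^*JU)=0$ (here the finiteness of $\sfi^-(J)$ is what allows the cancellation), whence $J-UJU^*\ge 0$ and $H_2^{-1}\le H_1^{-1}$. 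You instead run a homotopy argument along $H_t=(1-t)H_1+tH_2$: constancy of the finite negative inertia along the path (via Proposition~\ref{ineqprop2}(ii) applied to $H_1\le H_t\le H_2$), monotonicity of $\inf\sigma_{\mathrm{ess}}$ under the ordering, and the vanishing of $\ker H_t$ together give $0\in\rho(H_t)$ for all $t$, after which $\frac{d}{dt}H_t^{-1}=-H_t^{-1}DH_t^{-1}\le 0$ integrates to the claim. Your approach trades K\"othe's theorem for standard spectral-theoretic facts (min--max monotonicity of the bottom of the essential spectrum, and that spectrum off $\sigma_{\mathrm{ess}}$ consists of isolated finite-multiplicity eigenvalues); it also makes transparent exactly where $\min\{\sfi^+_2,\sfi^-_1\}<\infty$ enters, namely in preventing $0$ from being absorbed into the spectrum along the path. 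The only points worth spelling out are the uniform bound $\sup_{t\in[0,1]}\|H_t^{-1}\|<\infty$ (from compactness of $[0,1]$ and norm-continuity of $t\mapsto H_t^{-1}$, itself a consequence of the resolvent identity $H_s^{-1}-H_t^{-1}=(t-s)H_s^{-1}DH_t^{-1}$) needed to justify differentiating and integrating, and the observation that for bounded everywhere-defined operators the relation ordering reduces to the usual form ordering, so that the reduction to $(-H_2,-H_1)$ and the inequalities $H_1\le H_t\le H_2$ are legitimate. Both proofs are valid; the paper's is shorter once K\"othe's result is granted, while yours is self-contained modulo textbook spectral theory.
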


\begin{proof}
Observe that for the bounded and boundedly invertible
selfadjoint operators $H_1$ and $H_2$ one has
$\sfi^0_j=0=\sfi^\infty_j$, $j=1,2$. Hence ${\sf i}(H_1) = {\sf i}(H_2)$
is equivalent to  $\sfi^-_1=\sfi^-_2$ and $\sfi^+_1=\sfi^+_2$.
Furthermore, observe that $\sfi^-_1 < \infty$ implies that $\sfi^-_2 < \infty$
and that $\sfi^+_2 < \infty$ implies that $\sfi^+_1 < \infty$;
cf. Proposition~\ref{ineqprop2}.

$(\Rightarrow)$  In view of \eqref{coninnum1} the equalities
$\sfi^-_1=\sfi^-_2$ and $\sfi^+_1=\sfi^+_2$ follow by applying
Proposition~\ref{ineqprop2} to $H_1 \leq H_2$ and $H_2^{-1} \leq
H_1^{-1}$.

$(\Leftarrow$) Assume that  ${\sf i}(H_1) = {\sf i}(H_2)$, so that
$\sfi^-_1=\sfi^-_2$ and $\sfi^+_1=\sfi^+_2$.
The asserted implication will be shown in two steps.

First consider the case that $\sfi^-_1<\infty$. Then
$\sfi^-_2=\sfi^-_1 < \infty$. Now define the operator $J$ as
$I_{{\sf i}_1^+} \oplus  -I_{{\sf i}_1^-}$. Then a result of
G.~K\"othe, cf. \cite[Satz~1.2]{Kothe}, shows the existence of
bounded and boundedly invertible operators $V_1$ and $V_2$ such that
\begin{equation*}
H_1 = V_1^* J V_1 \quad \textrm{and} \quad H_2 = V_2^*J V_2.
\end{equation*}
By means of the above notation the inequality $H_1 \leq H_2$ can be
written as
\begin{equation}\label{yxc}
0 \leq J - U^* J U, \qquad U = V_1 V_2^{-1}.
\end{equation}
A simple calculation shows that
\begin{equation*}
\small{
\begin{pmatrix} I & 0 \\ JU^* & I\end{pmatrix}^* \begin{pmatrix} J-UJU^* &0 \\ 0 &J \end{pmatrix}\begin{pmatrix} I & 0 \\ JU^* & I\end{pmatrix}
=
\begin{pmatrix} I & JU \\ 0 & I \end{pmatrix}^* \begin{pmatrix} J & 0 \\ 0 &J-U^*JU \end{pmatrix}\begin{pmatrix} I & JU \\ 0 & I \end{pmatrix}.
}
\end{equation*}
Since congruence does not change the inertia of bounded operators, the
inertia of the diagonal matrices in the above equation coincide, i.e.,
\[
{\sfi}^-(J-UJU^*)+\sfi^-(J) = \sfi^-(J)+{\sf i}^-(J-U^*JU).
\]
As $\sfi^-(J)=\sfi_1^-<\infty$ and $\sfi^-(J-U^*JU)=0$ by \eqref{yxc} it follows
that $\sfi^-(J-UJU^*)=0$ and
hence $J-UJU^*$ is a nonnegative
operator. Using the definition of $U$, this yields
\[
H_2^{-1}=(V_2^* JV_2)^{-1} = V_2^{-1} JV_2^{-*}
\leq V_1^{-1}J V_1^{-*} = (V_1^*JV_1)^{-1}=H_1^{-1},
\]
which completes the proof in the case $\sfi^-_1<\infty$.

Next consider the case $\sfi^+_2<\infty$. Then it follows that
$\sfi^+_1=\sfi^+_2 < \infty$.  By \eqref{coninnum1} this implies
that $\sfi^-(-H_1)=\sfi^-(-H_2)< \infty$. Since $H_1 \le H_2$ is
equivalent to $-H_2 \leq -H_1$, the previous step shows that
$-H_1^{-1} \leq -H_2^{-1}$, which is equivalent to $H_2^{-1} \le
H_1^{-1}$; see Corollary~\ref{inequh}. This completes the proof of
Theorem~\ref{antith2}.
\end{proof}

\subsection{First main antitonicity theorem}\label{firstthm}

The following theorem is the infinite-dimensional version of
Theorem~\ref{intro1} from the introduction. Recall that for
selfadjoint relations $H_1$ and $H_2$ with closed ranges the
operator parts of $H_1^{-1}$ and $H_2^{-1}$ are bounded; in
particular, the relations $-H_1^{-1}$ and $-H_2^{-1}$ are bounded
from below.

\begin{theorem}\label{finth2}
Let $H_1$ and $H_2$ be selfadjoint relations in a separable
Hilbert space $\sH$ which are bounded from below and have closed
ranges.
Let $\sfi(H_j)=\{\sfi^+_j,\sfi^-_j,\sfi^0_j,\sfi^\infty_j\}$
be the inertia of $H_j$, $j=1,2$, and
assume that  $\sfi^-_1+\sfi^0_1<\infty$ and that $H_1 \leq H_2$. Then
\[
 -H_1^{-1} \leq -H_2^{-1} \quad \textrm{if and only if} \quad
 {\sf i}_1^- +{\sf i}_1^0 = {\sf i}_2^- +{\sf i}_2^0.
 \]
\end{theorem}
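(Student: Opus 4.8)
The plan is to prove the two implications separately, using Theorem~\ref{antith2} together with the spectral mapping identity \eqref{Tra} as the main inputs. For $(\Rightarrow)$ I would argue on the level of inertia alone. Since $H_1$ and $H_2$ have closed ranges, the relations $-H_1^{-1}$ and $-H_2^{-1}$ are bounded from below, so Proposition~\ref{ineqprop2} applies to both orderings. Part (ii) applied to $H_1\le H_2$ gives $\sfi^-_1+\sfi^0_1\ge\sfi^-_2+\sfi^0_2$, while part (iii) applied to $-H_1^{-1}\le-H_2^{-1}$, after rewriting the inertia of $-H_j^{-1}$ by means of \eqref{coninnum1} (so that $\sfi^+(-H_j^{-1})=\sfi^-_j$ and $\sfi^\infty(-H_j^{-1})=\sfi^0_j$), gives the reverse inequality $\sfi^-_1+\sfi^0_1\le\sfi^-_2+\sfi^0_2$. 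Together they yield the asserted equality.

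The direction $(\Leftarrow)$ is where the work lies, and the idea is to manufacture from $H_1\le H_2$ a pair of bounded, boundedly invertible operators to which Theorem~\ref{antith2} can be applied. Because the ranges are closed there is a common spectral gap: choose $\beta>0$ with $(0,\beta)\subset\rho(H_1)\cap\rho(H_2)$, fix $\lambda\in(0,\beta)$ and some $x_0<\min\{m_1,m_2\}$, and set $\epsilon=(\lambda-x_0)^{-1}>0$. Define the bounded selfadjoint operators
\[
 C_j=\tfrac{1}{\epsilon}\,I-\tfrac{1}{\epsilon^2}(H_j-x_0)^{-1},\qquad j=1,2.
\]
Since $x_0$ lies below both lower bounds, each $(H_j-x_0)^{-1}$ is a positive bounded operator and the defining inequality \eqref{ord01h} gives $(H_2-x_0)^{-1}\le(H_1-x_0)^{-1}$, hence $C_1\le C_2$. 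A direct computation (a M\"obius transform in the resolvent variable) shows that $C_j$ is boundedly invertible with $C_j^{-1}=(H_j-\lambda)^{-1}+\epsilon I$; equivalently $C_j=\psi(H_j)$, where $\psi(\sigma)=(\sigma-\lambda)/(1+\epsilon(\sigma-\lambda))$ is increasing on $[m_j,\infty)$, vanishes only at $\sigma=\lambda$, and satisfies $\psi(\infty)=1/\epsilon>0$.

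It then remains to verify that Theorem~\ref{antith2} applies. Since $\lambda$ lies in the gap, $\psi$ changes sign on $\sp(H_j)$ exactly across $0$: one finds $\sfi^-(C_j)=\dim\ran E_j((-\infty,\lambda))=\dim\ran E_j((-\infty,0])=\sfi^-_j+\sfi^0_j$ and, using \eqref{nn}, $\sfi^+(C_j)=\sfi^+_j+\sfi^\infty_j=\dim\sH-(\sfi^-_j+\sfi^0_j)$, while $\sfi^0(C_j)=\sfi^\infty(C_j)=0$. Writing $N=\sfi^-_1+\sfi^0_1=\sfi^-_2+\sfi^0_2<\infty$, the hypothesis gives $\sfi(C_1)=\sfi(C_2)=\{\dim\sH-N,\,N,\,0,\,0\}$ and $\min\{\sfi^+(C_2),\sfi^-(C_1)\}\le N<\infty$. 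Theorem~\ref{antith2} then yields $C_2^{-1}\le C_1^{-1}$, that is, $(H_2-\lambda)^{-1}\le(H_1-\lambda)^{-1}$. Finally, feeding this inequality into \eqref{Tra} at $s=-1/\lambda$ --- which lies below the lower bounds of $-H_1^{-1}$ and $-H_2^{-1}$ precisely because $\lambda<\beta$ --- and using that the affine map $X\mapsto\lambda^2X+\lambda$ preserves order, one obtains $0\le((-H_2^{-1}-s)^{-1}h,h)\le((-H_1^{-1}-s)^{-1}h,h)$, i.e. $-H_1^{-1}\le-H_2^{-1}$.

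The main obstacle is conceptual rather than computational: $H_j$ and $H_j-\lambda$ are in general unbounded and multivalued, so neither they nor the resolvents $(H_j-\lambda)^{-1}$ (which fail to be boundedly invertible, having kernel $\mul H_j$ and $0$ in their continuous spectrum) can be fed into Theorem~\ref{antith2} directly. The device that removes this obstruction is the operator $C_j$: built from the resolvent at a point $x_0$ below the lower bounds it automatically respects the ordering, yet its inverse is the resolvent at the positive gap point $\lambda$, so the genuinely delicate resolvent comparison at $\lambda$ is extracted from the trivial one at $x_0$. The hypothesis $\sfi^-_1+\sfi^0_1=\sfi^-_2+\sfi^0_2<\infty$ enters exactly here, guaranteeing both the finiteness condition in Theorem~\ref{antith2} and the equality $\sfi(C_1)=\sfi(C_2)$, while the closed range is what produces the gap making $C_j$ boundedly invertible; the only routine point to check carefully is the inertia count for $C_j$.
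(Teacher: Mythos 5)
Your proof is correct, and while the $(\Rightarrow)$ direction coincides with the paper's (Proposition~\ref{ineqprop2} applied to both orderings via \eqref{coninnum1}), your $(\Leftarrow)$ direction takes a genuinely different route. The paper perturbs the relations themselves, replacing $H_j$ by $H_j-\epsilon_j$ with two independent parameters $0<\epsilon_2\le\epsilon_1<\mu^+$ so that each shifted relation is boundedly invertible with inertia $\{\sfi_j^+,\sfi_j^-+\sfi_j^0,0,\sfi_j^\infty\}$, transforms to bounded operators via \eqref{Tra} at a point below the lower bounds, applies Theorem~\ref{antith2}, and then must recover $-H_1^{-1}\le-H_2^{-1}$ by letting $\epsilon_2\downarrow 0$ and then $\epsilon_1\downarrow 0$, which requires the monotone-limit machinery of Lemma~\ref{s3}. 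You perturb nothing: the single M\"obius transform $C_j=\tfrac{1}{\epsilon}I-\tfrac{1}{\epsilon^2}(H_j-x_0)^{-1}$ of the order-reversed resolvent at $x_0$ already lands on bounded, boundedly invertible operators whose inverses are $(H_j-\lambda)^{-1}+\epsilon I$ (resolvent identity), so one application of Theorem~\ref{antith2} plus one application of \eqref{Tra} at $x=-1/\lambda$ finishes the argument with no limiting procedure at all. What each approach buys: yours is shorter and more self-contained for this theorem, since Lemma~\ref{s3} is not needed; the price is that it leans entirely on the closed-range hypothesis to place $\lambda$ at positive distance from $\sp(H_1)\cup\sp(H_2)$, which is what makes $C_j$ boundedly invertible, forces $E_j((-\infty,\lambda))=E_j((-\infty,0])$ in the inertia count, and keeps $-1/\lambda$ strictly below the lower bounds of $-H_j^{-1}$ so that the final inequality is exactly \eqref{ord01h} for the pair $-H_1^{-1}\le-H_2^{-1}$. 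All of these checks go through. One cosmetic caveat: with the paper's convention $\varphi(H)=\varphi(H_s)\hoplus(\{0\}\times\mul H)$, the symbol $\psi(H_j)$ would denote a relation with multivalued part $\mul H_j$ rather than your operator $C_j$; keep the explicit formula for $C_j$ as the definition and use $\psi$ only to read off the spectrum, noting separately that $C_j$ acts as $\tfrac{1}{\epsilon}I$ on $\mul H_j$, which is why $\mul H_j$ contributes to $\sfi^+(C_j)$.
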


\begin{proof}
$(\Rightarrow)$ Apply Proposition~\ref{ineqprop2} and \eqref{coninnum1} to the
inequalities $H_1 \leq H_2$ and  $-H_1^{-1} \leq -H_2^{-1}$.  Then the inertia equality
${\sf i}_1^- +{\sf i}_1^0 = {\sf i}_2^- +{\sf i}_2^0$ follows.

$(\Leftarrow)$ Let $H_1 \leq H_2$ and assume that ${\sf i}_1^- +{\sf
i}_1^0 = {\sf i}_2^- +{\sf i}_2^0<\infty$ holds.
Since the ranges of $H_1$ and $H_2$ are closed,
there exists a constant $\delta>0$, such that
$(-\delta,\delta)\backslash\{0\}\subset \rho(H_j)$; i.e.
$H_j$ has  a spectral gap around $0$ and the point $0$
is possibly an isolated eigenvalue of finite multiplicity, $j=1,2$.
Define $\mu^+:=\min\bigl\{1,\delta\bigr\}$,
then Proposition~\ref{oldh} implies that the inequality
\begin{equation}\label{finth22}
H_1 - \epsilon_1 \leq H_2 - \epsilon_2,
\qquad 0<\epsilon_2\le \epsilon_1<\mu^+,
\end{equation}
holds. Clearly,  $H_j(\epsilon_j):=H_j - \epsilon_j$
is boundedly invertible and its inertia is
\begin{equation}\label{finth3}
\sfi\bigl(H_j(\epsilon_j)\bigr)
= \bigl\{\sfi^+_j(\epsilon_j),\sfi^-_j(\epsilon_j),\sfi^0_j(\epsilon_j), \sfi^\infty_j(\epsilon_j)\bigr\}
=\bigl\{\sfi_j^+,\sfi^-_j+\sfi^0_j,0,\sfi^\infty_j\bigr\},\quad j=1,2.
\end{equation}
Let $m_j$ be a lower bound for $H_j$, $j=1,2$.
Then $m_j-1<m_j-\epsilon_j$
is a lower bound for $H_j(\epsilon_j)$, $j=1,2$.
Hence $H_1(\epsilon_1) \leq  H_2(\epsilon_2)$
in \eqref{finth22} implies that
\[
 0 \leq ( H_2(\epsilon_2) - x)^{-1} \leq (H_1(\epsilon_1) - x)^{-1},
 \quad x<\min\{0,m_1-1,m_2-1\};
\]
cf. \eqref{ord01h}. Using \eqref{Tra},
this yields the inequality
\begin{equation}\label{eqeqeqeq}
 \left( -H_2(\epsilon_2)^{-1} + 1/x\right)^{-1}
 \leq \left( -H_1(\epsilon_1)^{-1} + 1/x\right)^{-1}.
\end{equation}
By \eqref{coninnum1} and \eqref{finth3} the inertia numbers
of $-H_j(\epsilon_j)^{-1}$, $j=1,2$, are given by
\begin{equation}\label{inin}
\sfi\bigl(-H_j(\epsilon_j)^{-1}\bigr)=\bigl\{\sfi^-_j(\epsilon_j),\sfi^+_j(\epsilon_j),\sfi^\infty_j(\epsilon_j),0\bigr\},\quad
j=1,2.
\end{equation}
Since $(0,-1/(m_j-1))\subset\rho(-H_j(\epsilon_j)^{-1})$
the operator $-H_j(\epsilon_j)^{-1} + 1/x$ is bounded
and boundedly invertible
for all $x<\min\{0,m_1-1,m_2-1\}$, $j=1,2$.
Hence \eqref{inin} and \eqref{finth3} imply that for $j=1,2$:
\begin{equation*}\label{innum}
\begin{split}
\sfi\bigl(-H_j(\epsilon_j)^{-1}+1/x\bigr)
&=\bigl\{\sfi^-_j(\epsilon_j),\sfi^+_j(\epsilon_j)+\sfi^\infty_j(\epsilon_j),0,0\bigr\} =
\bigl\{\sfi^-_j+ \sfi^0_j,\sfi^+_j+\sfi^\infty_j,0,0\bigr\}.
\end{split}
\end{equation*}
Since by assumption $\sfi_1^-+\sfi_1^0=\sfi_2^-+\sfi_2^0< \infty$,
Theorem~\ref{antith2} applied to \eqref{eqeqeqeq} yields
\[
-H_1(\epsilon_1)^{-1} + 1/x\leq  -H_2(\epsilon_2)^{-1} + 1/x,
\quad 0<\epsilon_2\le \epsilon_1<\mu^+
\]
or, equivalently,
\begin{equation}\label{finth6}
 -(H_1 - \epsilon_1)^{-1}  \leq  -(H_2 - \epsilon_2)^{-1}, \quad 0<\epsilon_2\le
\epsilon_1<\mu^+.
\end{equation}
Now letting subsequently $\epsilon_2\downarrow 0$ and
$\epsilon_1\downarrow 0$ in \eqref{finth6} in the strong resolvent sense
and using Lemma~\ref{s3} in each step, the inequality
$-H_1^{-1} \leq -H_2^{-1}$ is obtained.
\end{proof}

It is emphasized that the equivalence in Theorem~\ref{finth2}
is not true without the minus signs; see Corollary~\ref{inequh}.

\begin{corollary}\label{cor1}
Let $H_1$ and $H_2$ be selfadjoint relations in
a separable Hilbert space $\sH$
with closed domains and closed ranges.
Let $\sfi(H_j)=\{\sfi^+_j,\sfi^-_j,\sfi^0_j,\sfi^\infty_j\}$
be the inertia of $H_j$, $j=1,2$, and assume that
$\sfi^-_1+\sfi^0_1<\infty$, $\sfi^\infty_2<\infty$,
and that $H_1 \leq H_2$.
Then the following statements are equivalent:
\begin{enumerate}
\def\labelenumi{\rm (\roman{enumi})}
\item ${\sf i}(H_1)={\sf i}(H_2)$;
\item \begin{enumerate}
\item[(a)] $-H_1^{-1}\le -H_2^{-1}$;
\item[(b)] $\mul H_1 = \mul H_2$;
\item[(c)] $\ker H_1 = \ker H_2$;
\end{enumerate}
\item $-H_2\leq -H_1$, $-H_1^{-1}\le -H_2^{-1}$, and $H_2^{-1}\le H_1^{-1}$.
\end{enumerate}
\end{corollary}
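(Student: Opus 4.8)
The plan is to establish the equivalence by a cyclic chain of implications, (i)$\Rightarrow$(ii)$\Rightarrow$(iii)$\Rightarrow$(i), drawing throughout on the inertia identities \eqref{coninnum1}, the multivalued-part implication \eqref{incl}, Corollary~\ref{inequh}, Proposition~\ref{ineqprop2}, Proposition~\ref{toegevoegd2}, and the first main antitonicity theorem, Theorem~\ref{finth2}. Since the domains and ranges of $H_1$ and $H_2$ are closed, the operator parts of $H_1$, $H_2$, $H_1^{-1}$, $H_2^{-1}$ (and of their negatives) are all bounded; hence every relation appearing below is bounded from below and has closed domain, which is exactly what is needed to apply Corollary~\ref{inequh} and Proposition~\ref{ineqprop2} also to the inverses. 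The two finiteness hypotheses $\sfi^-_1+\sfi^0_1<\infty$ and $\sfi^\infty_2<\infty$ will be used repeatedly to cancel finite inertia numbers in the various inequalities; without them these cancellations would fail.

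For (i)$\Rightarrow$(ii), assume $\sfi(H_1)=\sfi(H_2)$. Part (a) follows from Theorem~\ref{finth2}, since $\sfi^-_1+\sfi^0_1=\sfi^-_2+\sfi^0_2<\infty$; part (b) follows from Proposition~\ref{toegevoegd2}(i) using $\sfi^\infty_1=\sfi^\infty_2<\infty$; and part (c) follows from the final assertion of Proposition~\ref{toegevoegd2} using $\sfi^-_1=\sfi^-_2<\infty$ and $\sfi^0_1=\sfi^0_2<\infty$. For (ii)$\Rightarrow$(iii), the inequality $-H_1^{-1}\le -H_2^{-1}$ is just (a); the inequality $-H_2\le -H_1$ follows from Corollary~\ref{inequh} applied to $H_1\le H_2$ together with $\mul H_1=\mul H_2$ from (b); and $H_2^{-1}\le H_1^{-1}$ follows from Corollary~\ref{inequh} applied to the pair $-H_1^{-1}\le -H_2^{-1}$, since $\mul(-H_1^{-1})=\ker H_1=\ker H_2=\mul(-H_2^{-1})$ by (c) and $-(-H_2^{-1})\le -(-H_1^{-1})$ is precisely $H_2^{-1}\le H_1^{-1}$.

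The implication (iii)$\Rightarrow$(i) is the main work, because here all four inertia equalities must be recovered, including $\sfi^+_1=\sfi^+_2$, which may be infinite and so cannot be extracted from \eqref{nn}. First, applying \eqref{incl} to $H_1\le H_2$ and to $-H_2\le -H_1$ gives $\mul H_1=\mul H_2$, hence $\sfi^\infty_1=\sfi^\infty_2$; applying \eqref{incl} to $-H_1^{-1}\le -H_2^{-1}$ and to $H_2^{-1}\le H_1^{-1}$, and using $\mul(\pm H_j^{-1})=\ker H_j$, gives $\ker H_1=\ker H_2$, hence $\sfi^0_1=\sfi^0_2$. Next, Theorem~\ref{finth2} applied to $-H_1^{-1}\le -H_2^{-1}$ yields $\sfi^-_1+\sfi^0_1=\sfi^-_2+\sfi^0_2$, whence $\sfi^-_1=\sfi^-_2$ after cancelling the finite $\sfi^0_j$.

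Finally, $\sfi^+_1=\sfi^+_2$ is obtained by combining two opposite inequalities from Proposition~\ref{ineqprop2}: its part (iii) applied to $H_1\le H_2$ gives $\sfi^+_1+\sfi^\infty_1\le \sfi^+_2+\sfi^\infty_2$, hence $\sfi^+_1\le \sfi^+_2$ after cancelling the finite $\sfi^\infty_j$; and its part (iii) applied to $H_2^{-1}\le H_1^{-1}$, rewritten via the inertia identity \eqref{coninnum1}, gives $\sfi^+_2+\sfi^0_2\le \sfi^+_1+\sfi^0_1$, hence $\sfi^+_2\le \sfi^+_1$ after cancelling the finite $\sfi^0_j$. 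Thus $\sfi^+_1=\sfi^+_2$, which together with the three previous equalities gives $\sfi(H_1)=\sfi(H_2)$ and closes the cycle. I expect the only delicate point to be exactly this last step: the bookkeeping of the inverse inertia through \eqref{coninnum1} and the systematic invocation of the finiteness hypotheses to license each cancellation.
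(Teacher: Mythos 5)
Your proof is correct and follows the paper's own route almost exactly: the same cyclic chain (i)$\Rightarrow$(ii)$\Rightarrow$(iii)$\Rightarrow$(i), with (i)$\Rightarrow$(ii) from Theorem~\ref{finth2} and Proposition~\ref{toegevoegd2}, (ii)$\Rightarrow$(iii) from Corollary~\ref{inequh} applied to $H_1\le H_2$ and to $-H_1^{-1}\le -H_2^{-1}$, and (iii)$\Rightarrow$(i) recovering $\sfi^\infty$, $\sfi^0$, and $\sfi^-+\sfi^0$ as you do. The one place you diverge is the final equality $\sfi^+_1=\sfi^+_2$: the paper simply invokes \eqref{nn}, while you extract two opposite inequalities from Proposition~\ref{ineqprop2}(iii) applied to $H_1\le H_2$ and to $H_2^{-1}\le H_1^{-1}$ via \eqref{coninnum1}. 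Your alternative is valid, but your stated reason for avoiding \eqref{nn} --- that $\sfi^+_j$ may be infinite and so ``cannot be extracted'' from it --- is not actually a problem: since $\sfi^-_j$, $\sfi^0_j$, $\sfi^\infty_j$ are all finite and pairwise equal, \eqref{nn} forces $\sfi^+_1=\sfi^+_2$ in every case (both equal $\dim\sH$ minus the same finite number when $\sH$ is finite-dimensional, and both equal $\aleph_0$ when $\sH$ is infinite-dimensional and separable). So your detour costs a little extra bookkeeping but buys nothing the counting identity does not already give.
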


\begin{proof}
(i) $\Rightarrow$ (ii) This follows from Theorem~\ref{finth2} and
Proposition~\ref{toegevoegd2}.

(ii) $\Rightarrow$ (iii) Apply Corollary~\ref{inequh}
to the inequalities $H_1\leq H_2$ and
$-H_1^{-1}\leq -H_2^{-1}$.  Then the desired inequalities follow.

(iii) $\Rightarrow$ (i)
If the stated inequalities hold, then by Corollary~\ref{inequh}
$\mul H_1 = \mul H_2$ and $\ker H_1 = \ker H_2$,
i.e. $\sfi_1^\infty = \sfi_2^\infty$ and
$\sfi_1^0 = \sfi_2^0$. Furthermore, the inequality
$-H_1^{-1}\le -H_2^{-1}$ implies that
$\sfi_1^- + \sfi_1^0 = \sfi_2^- + \sfi_2^0$.
Since $\sfi^-_j$, $\sfi^0_j$, and $\sfi^\infty_j$ are finite for $j=1,2$,
\eqref{nn} shows that (i) holds.
\end{proof}

\subsection{Second main antitonicity theorem}

The following theorem is the infinite-dimensional version of
Theorem~\ref{intro2} from the introduction.
It is emphasized that in contrast to Theorem~\ref{finth2} there is
no closed range assumption on the relations.
However, the conditions $H_1 \le H_2$
and $\sfi^-_1<\infty$ imply $\sfi^-_2<\infty$; hence $H_1^{-1}$
and $H_2^{-1}$ are both semibounded from below.

\begin{theorem}\label{antinew2}
Let $H_1$ and $H_2$ be selfadjoint relations in
a separable Hilbert space $\sH$ which are bounded from below.
Let $\sfi(H_j)=\{\sfi^+_j,\sfi^-_j,\sfi^0_j,\sfi^\infty_j\}$
be the inertia of $H_j$, $j=1,2$, and assume that $\sfi^-_1<\infty$
and that $H_1 \leq H_2$. Then
\[
 H_2^{-1} \leq H_1^{-1} \quad \textrm{if and only if} \quad
 {\sf i}_1^- = {\sf i}_2^-.
 \]
\end{theorem}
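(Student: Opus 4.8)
The plan is to establish the two implications separately; the direction $(\Rightarrow)$ is immediate, while $(\Leftarrow)$ is obtained by a perturbation argument that parallels the proof of Theorem~\ref{finth2}, with the roles of the two sides of the point $0$ interchanged.

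For $(\Rightarrow)$, note first that $\sfi^-_1<\infty$ together with $H_1\le H_2$ forces $\sfi^-_2\le\sfi^-_1<\infty$ by Proposition~\ref{ineqprop2}(ii), so that $H_1^{-1}$ and $H_2^{-1}$ are bounded from below and Proposition~\ref{ineqprop2} is applicable to them. Applying Proposition~\ref{ineqprop2}(ii) to $H_2^{-1}\le H_1^{-1}$ and using $\sfi^-(H_j^{-1})=\sfi^-_j$ from \eqref{coninnum1} yields the reverse inequality $\sfi^-_1\le\sfi^-_2$, and hence $\sfi^-_1=\sfi^-_2$.

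For $(\Leftarrow)$, suppose $\sfi^-_1=\sfi^-_2<\infty$. Since $\ran E_j((-\infty,0))$ is then finite-dimensional, the negative spectrum of $H_j$ consists of finitely many eigenvalues bounded away from $0$, so there is a $\delta>0$ with $(-\delta,0)\subset\rho(H_1)\cap\rho(H_2)$; this one-sided gap below $0$ takes the place of the closed-range hypothesis used in Theorem~\ref{finth2}, which furnished a gap on both sides of $0$. The first step is to perturb: for $0<\epsilon_1\le\epsilon_2<\delta$ put $G_j:=H_j+\epsilon_j$. Each $G_j$ is boundedly invertible, and since the shift pushes the zero eigenvalue of $H_j$ into the positive part one has $\sfi(G_j)=\{\sfi^+_j+\sfi^0_j,\sfi^-_j,0,\sfi^\infty_j\}$. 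By Proposition~\ref{oldh} the ordering $H_1\le H_2$ propagates to $G_1\le G_2$, so that \eqref{ord01h} together with the spectral mapping identity \eqref{Tra}, applied exactly as in the passage to \eqref{eqeqeqeq}, turns the resolvent inequality into $A_2^{-1}\le A_1^{-1}$, where $A_j:=-G_j^{-1}+1/x$ for a fixed $x$ below the spectra of $G_1$ and $G_2$.

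The second step applies Theorem~\ref{antith2} to the bounded, boundedly invertible operators $A_j$. Combining $\sfi(G_j)$ with \eqref{coninnum1} and tracking the shift by $1/x$ gives $\sfi(A_j)=\{\sfi^-_j,\sfi^+_j+\sfi^0_j+\sfi^\infty_j,0,0\}$, and by \eqref{nn} the second entry equals $\dim\sH-\sfi^-_j$; thus the assumption $\sfi^-_1=\sfi^-_2$ is precisely what yields $\sfi(A_1)=\sfi(A_2)$, while the finiteness requirement of Theorem~\ref{antith2} is supplied by $\sfi^+(A_2)=\sfi^-_2<\infty$ (this also covers the case $\dim\sH=\infty$, where the second inertia entry is infinite). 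Since $\sfi(A_1^{-1})=\sfi(A_2^{-1})$, Theorem~\ref{antith2} applied to $A_2^{-1}\le A_1^{-1}$ then produces $A_1\le A_2$; as the $A_j$ are bounded, this is literally $G_2^{-1}\le G_1^{-1}$, i.e.\ $(H_2+\epsilon_2)^{-1}\le(H_1+\epsilon_1)^{-1}$. Finally I would let $\epsilon_1,\epsilon_2\downarrow 0$ using the right-endpoint ($\beta=0$) half of Lemma~\ref{s3} attached to the gap $(-\delta,0)$: fixing $\epsilon_2$ and letting $\epsilon_1\downarrow 0$, monotonicity gives $(H_1+\epsilon_1)^{-1}\le H_1^{-1}$ and hence $(H_2+\epsilon_2)^{-1}\le H_1^{-1}$ for every $\epsilon_2\in(0,\delta)$, and then Lemma~\ref{s3}(ii) with $K_\beta=H_1^{-1}$ yields $H_2^{-1}\le H_1^{-1}$. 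The main obstacle is the sign and inversion bookkeeping: the perturbation and the choice of $x$ must be arranged so that $\sfi(A_j)$ depends only on $\sfi^-_j$, and the limit must be taken on the left of $0$, so that the bounded operator inequality (in which $-G_1^{-1}\le-G_2^{-1}$ and $G_2^{-1}\le G_1^{-1}$ coincide) survives in the unbounded limit as $H_2^{-1}\le H_1^{-1}$ rather than as the inequivalent $-H_1^{-1}\le-H_2^{-1}$ of Theorem~\ref{finth2}.
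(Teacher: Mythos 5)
Your proposal is correct and follows essentially the same route as the paper: shift by $\epsilon_j$ to obtain boundedly invertible relations with $\sfi(H_j+\epsilon_j)=\{\sfi^+_j+\sfi^0_j,\sfi^-_j,0,\sfi^\infty_j\}$, establish $(H_2+\epsilon_2)^{-1}\le(H_1+\epsilon_1)^{-1}$ for the perturbed relations, and pass to the limit $\epsilon_1,\epsilon_2\downarrow 0$ via Lemma~\ref{s3}. The only cosmetic difference is that the paper obtains the perturbed inequality by citing Theorem~\ref{finth2} (whose hypotheses the shifted relations satisfy, having closed ranges), whereas you inline that theorem's proof by applying \eqref{Tra} and Theorem~\ref{antith2} directly; your inertia bookkeeping for $A_j=-G_j^{-1}+1/x$ is consistent with the paper's.
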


\begin{proof}
$(\Rightarrow)$
Apply Proposition~\ref{ineqprop2} and \eqref{coninnum1} to
$H_1 \leq H_2$ and $H_2^{-1} \leq H_1^{-1}$. Then
the inertia equality ${\sf i}_1^- = {\sf i}_2^-$ follows.

$(\Leftarrow)$  Let $H_1 \leq H_2$ and assume that
${\sf i}_1^- = {\sf i}_2^-<\infty$ holds. Then the negative spectrum
of $H_j$ consists of $0\leq \sfi^-_j<\infty$
eigenvalues (counting multiplicities), $j=1,2$.
Let $\mu_j^-$ be the largest negative
eigenvalue of $H_j$ if $0<\sfi^-_j$ and define
$$
\mu^-:=\begin{cases} \min\{1,-\mu_1^-,-\mu_2^-\}, & \sfi_1^-=\sfi_2^->0,\\
        1, & \sfi_1^-=\sfi_2^-=0.
       \end{cases}
$$
Then
\begin{equation}\label{ineqh1h2U}
H_1 + \epsilon_1 \le H_2 + \epsilon_2,
\quad 0<\epsilon_1\le \epsilon_2<\mu^-,
\end{equation}
where $H_j + \epsilon_j$ is boundedly invertible
and $\sfi(H_j+\epsilon_j)=\{\sfi_j^+
+\sfi_j^0,\sfi_j^-,0,\sfi_j^\infty\}$, $j=1,2$.
Since by assumption $\sfi_1^-=\sfi_2^-<\infty$, Theorem~\ref{finth2}
can be applied to \eqref{ineqh1h2U} yielding
\[
 -(H_1+\epsilon_1)^{-1} \leq -(H_2+\epsilon_2)^{-1},
 \quad 0<\epsilon_1\le \epsilon_2<\mu^-.
\]
Because $(H_j+\epsilon_j)^{-1}$, $j=1,2$, is a bounded operator,
this inequality can be rewritten as
\begin{equation}\label{finth08U}
 (H_2+\epsilon_2)^{-1} \leq (H_1+\epsilon_1)^{-1},
 \quad 0<\epsilon_1\le \epsilon_2<\mu^-.
\end{equation}
Now letting subsequently $\epsilon_1\downarrow 0$
and $\epsilon_2\downarrow 0$ in \eqref{finth08U}
in the strong resolvent sense
and using Lemma~\ref{s3} in each step
(which is possible since $(-\mu^-,0)\subset \rho(H_j)$, $j=1,2$),
the inequality $H_2^{-1} \leq H_1^{-1}$ is obtained.
\end{proof}

Theorem~\ref{antinew2} with $\sfi^-_1=0$
implies the following well-known result
for nonnegative selfadjoint operators and relations;
cf. \cite{AN,CS}.

\begin{corollary}\label{cor5}
Let $H_1$ and $H_2$ be selfadjoint relations in
a separable Hilbert space $\sH$. Then
\[
 0\le H_1\le H_2 \quad \textrm{if and only if} \quad 0\le H_2^{-1}\le H_1^{-1}.
\]
\end{corollary}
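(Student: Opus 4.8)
The plan is to obtain both implications from Theorem~\ref{antinew2} together with the inertia identity \eqref{coninnum1}, exploiting the fact that inversion of selfadjoint relations is an involution, so that the asserted equivalence is self-dual under $H_j \mapsto H_j^{-1}$.

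First I would establish the forward implication. Assume $0 \le H_1 \le H_2$. By transitivity of the ordering this also gives $0 \le H_2$, so $H_1$ and $H_2$ are both nonnegative; in particular each is bounded from below (by $0$) and $\sfi^-_1 = \sfi^-_2 = 0$. Hence the hypotheses of Theorem~\ref{antinew2} are satisfied, since $\sfi^-_1 = 0 < \infty$ and $H_1 \le H_2$, and the inertia equality $\sfi^-_1 = \sfi^-_2$ holds; that theorem therefore yields $H_2^{-1} \le H_1^{-1}$. It remains to verify $0 \le H_2^{-1}$, and for this I would read off from \eqref{coninnum1} that $\sfi^-(H_2^{-1}) = \sfi^-(H_2) = 0$, so the negative spectrum of $H_2^{-1}$ is empty and $H_2^{-1}$ is nonnegative. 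Combining the two conclusions gives $0 \le H_2^{-1} \le H_1^{-1}$, as required.

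For the converse I would simply apply the forward implication to the relations $H_2^{-1}$ and $H_1^{-1}$. Indeed, if $0 \le H_2^{-1} \le H_1^{-1}$, then putting $G_1 = H_2^{-1}$ and $G_2 = H_1^{-1}$ one has $0 \le G_1 \le G_2$ between two selfadjoint relations, and the forward step produces $0 \le G_2^{-1} \le G_1^{-1}$. Since $(H^{-1})^{-1} = H$ for every relation $H$, this reads precisely $0 \le H_1 \le H_2$, which completes the proof.

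The argument is short because in the nonnegative case the hypotheses of Theorem~\ref{antinew2} are verified immediately, with $\sfi^-_1 = 0$ supplying both the finiteness condition and the required inertia equality. The only point needing a moment's care is the nonnegativity of the inverse, which I would obtain from \eqref{coninnum1} rather than reprove from scratch; no genuine obstacle arises, as the self-dual structure delivers the converse with no additional work.
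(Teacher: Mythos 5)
Your proof is correct and follows exactly the route the paper intends: the paper gives no explicit proof, merely noting that the corollary is Theorem~\ref{antinew2} with $\sfi^-_1=0$, and your write-up supplies precisely the missing details (nonnegativity giving $\sfi^-_1=\sfi^-_2=0$, the identity \eqref{coninnum1} yielding $H_2^{-1}\ge 0$, and the involution $H\mapsto H^{-1}$ handling the converse). No issues.
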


The following corollary for (not necessarily bounded) selfadjoint operators
extends Theorem~\ref{antith2}; cf.  \cite[Theorems 1,~2]{S},
\cite[Theorem~2]{HaNo0}, and \cite[Theorem~1.4]{HaNo}.

\begin{corollary}\label{antinewcor}
Let $H_1$ and $H_2$ be injective selfadjoint operators in
a separable Hilbert space $\sH$ and let
$\sfi(H_j)=\{\sfi^+_j,\sfi^-_j,\sfi^0_j,\sfi^\infty_j\}$ be the inertia of $H_j$,
$j=1,2$. Then the following statements hold:
\begin{itemize}
 \item [{\rm (i)}] if $H_1$ and $H_2$ are bounded from below,
 $\sfi^-_1<\infty$, and $H_1\leq H_2$, then
\[ H_2^{-1} \leq H_1^{-1} \quad \textrm{if and only if} \quad {\sf i}(H_1) = {\sf i}(H_2);\]
\item [{\rm (ii)}] if $-H_1$ and $-H_2$ are bounded from below, $\sfi^+_2<\infty$, and $-H_2\leq -H_1$, then
\[ -H_1^{-1} \leq -H_2^{-1} \quad \textrm{if and only if} \quad {\sf i}(H_1) = {\sf i}(H_2).\]
\end{itemize}
\end{corollary}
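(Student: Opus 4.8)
The plan is to read off both equivalences from Theorem~\ref{antinew2}, after observing that the hypotheses fix two of the four inertia numbers. Since $H_1$ and $H_2$ are operators, $\mul H_j=\{0\}$ and hence $\sfi^\infty_j=0$; since they are injective, $\ker H_j=\{0\}$ and hence $\sfi^0_j=0$, for $j=1,2$. Thus $\sfi(H_j)=\{\sfi^+_j,\sfi^-_j,0,0\}$, and \eqref{nn} gives $\sfi^+_j+\sfi^-_j=\dim\sH$. Consequently the full equality $\sfi(H_1)=\sfi(H_2)$ amounts to the two conditions $\sfi^-_1=\sfi^-_2$ and $\sfi^+_1=\sfi^+_2$, and whenever one of these holds with a finite common value it forces the other: if $\dim\sH<\infty$ this is clear from $\sfi^+_j=\dim\sH-\sfi^-_j$, while if $\dim\sH=\infty$ a finite value of one summand forces the complementary summand to equal $\infty$ for $j=1$ and $j=2$ alike.

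For statement (i) I would apply Theorem~\ref{antinew2} directly: its hypotheses ($H_1,H_2$ bounded from below, $\sfi^-_1<\infty$, $H_1\le H_2$) are exactly those assumed, and it yields $H_2^{-1}\le H_1^{-1}$ if and only if $\sfi^-_1=\sfi^-_2$. Since $\sfi^-_1<\infty$, the dichotomy of the first paragraph upgrades the equality $\sfi^-_1=\sfi^-_2$ to the full inertia equality $\sfi(H_1)=\sfi(H_2)$, and conversely the latter trivially contains $\sfi^-_1=\sfi^-_2$; this proves (i).

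For statement (ii) the idea is to pass to negatives and apply Theorem~\ref{antinew2} again. Put $G_1=-H_2$ and $G_2=-H_1$; these are bounded from below by hypothesis, and $-H_2\le -H_1$ reads $G_1\le G_2$. Passing from $H$ to $-H$ interchanges the positive and negative spectral subspaces while leaving kernel and multivalued part unchanged, so $\sfi(-H)=\{\sfi^-(H),\sfi^+(H),\sfi^0(H),\sfi^\infty(H)\}$; in particular $\sfi^-(G_1)=\sfi^+_2<\infty$ and $\sfi^-(G_2)=\sfi^+_1$. Theorem~\ref{antinew2} then gives $G_2^{-1}\le G_1^{-1}$ if and only if $\sfi^-(G_1)=\sfi^-(G_2)$. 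Using $(-H)^{-1}=-H^{-1}$, this reads $-H_1^{-1}\le -H_2^{-1}$ if and only if $\sfi^+_1=\sfi^+_2$, and the first paragraph (now with the finite summand $\sfi^+_2$) turns $\sfi^+_1=\sfi^+_2$ into $\sfi(H_1)=\sfi(H_2)$; this proves (ii).

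I expect no genuine analytic obstacle here, since all of the substance is already carried by Theorem~\ref{antinew2}. The only points requiring care are the bookkeeping identities $\sfi(-H)=\{\sfi^-(H),\sfi^+(H),\sfi^0(H),\sfi^\infty(H)\}$ and $(-H)^{-1}=-H^{-1}$, together with the finite-versus-infinite dimensional dichotomy that recovers the complementary inertia equality from a single one; these are the steps I would write out carefully.
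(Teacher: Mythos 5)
Your proposal is correct and follows the route the paper intends: the corollary is stated without proof immediately after Theorem~\ref{antinew2}, and the expected argument is precisely yours, namely applying Theorem~\ref{antinew2} directly for (i) and to $-H_2\le -H_1$ for (ii), with the observation that injectivity and operator-ness force $\sfi^0_j=\sfi^\infty_j=0$ so that a single finite inertia equality (via \eqref{nn}) upgrades to $\sfi(H_1)=\sfi(H_2)$. The bookkeeping identities $(-H)^{-1}=-H^{-1}$ and $\sfi(-H)=\{\sfi^-(H),\sfi^+(H),\sfi^0(H),\sfi^\infty(H)\}$ that you flag are exactly the points worth writing out, and they hold as stated (cf.~\eqref{coninnum1}).
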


Combining Theorem~\ref{finth2}, Theorem~\ref{antinew2},
and Proposition~\ref{toegevoegd2} yields the following result.

\begin{corollary}\label{cor6}
Let $H_1$ and $H_2$ be selfadjoint operators in
a separable Hilbert space $\sH$
which are bounded from below and have closed ranges.
Let $\sfi(H_j)=\{\sfi^+_j,\sfi^-_j,\sfi^0_j,\sfi^\infty_j\}$
be the inertia of $H_j$, $j=1,2$, and assume that
$\sfi^-_1+\sfi^0_1<\infty$
and that $H_1 \leq H_2$. Then
\[
H_2^{-1}\le H_1^{-1},\,\,\,\, -H_1^{-1}\leq -H_2^{-1}
\quad \textrm{if and only if} \quad \sfi_1^-=\sfi_2^-,\,\,\,\,
\sfi_1^0=\sfi_2^0,
\]
in which case $\ker H_1=\ker H_2$.
\end{corollary}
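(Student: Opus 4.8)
The plan is to obtain the corollary by assembling Theorem~\ref{antinew2}, Theorem~\ref{finth2}, and the final assertion of Proposition~\ref{toegevoegd2}; no additional machinery is required, since all of the hypotheses those results demand are already part of the present assumptions. First I would record two elementary observations: because $H_1$ and $H_2$ are operators one has $\sfi^\infty_1=\sfi^\infty_2=0$, and because $\sfi^-_1+\sfi^0_1<\infty$ each of $\sfi^-_1$ and $\sfi^0_1$ is finite.

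For the implication from left to right I would argue as follows. The relations are bounded from below with $\sfi^-_1<\infty$ and satisfy $H_1\le H_2$, so applying Theorem~\ref{antinew2} to the hypothesis $H_2^{-1}\le H_1^{-1}$ yields $\sfi^-_1=\sfi^-_2$. Since in addition the ranges are closed and $\sfi^-_1+\sfi^0_1<\infty$, applying Theorem~\ref{finth2} to the hypothesis $-H_1^{-1}\le -H_2^{-1}$ yields $\sfi^-_1+\sfi^0_1=\sfi^-_2+\sfi^0_2$. Subtracting the first equality from the second then gives $\sfi^0_1=\sfi^0_2$.

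For the reverse implication I would run the same two theorems in the opposite direction. Assuming $\sfi^-_1=\sfi^-_2$ and $\sfi^0_1=\sfi^0_2$, the finite equality $\sfi^-_1=\sfi^-_2$ together with Theorem~\ref{antinew2} produces $H_2^{-1}\le H_1^{-1}$, while adding the two equalities gives $\sfi^-_1+\sfi^0_1=\sfi^-_2+\sfi^0_2$, whence Theorem~\ref{finth2} produces $-H_1^{-1}\le -H_2^{-1}$. To obtain the supplementary conclusion $\ker H_1=\ker H_2$ I would invoke the final assertion of Proposition~\ref{toegevoegd2}, whose hypotheses $\sfi^-_1=\sfi^-_2<\infty$ and $\sfi^0_1=\sfi^0_2<\infty$ are exactly the equivalent condition just established.

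I do not anticipate a genuine obstacle here; the argument is essentially bookkeeping. The single point that must be handled with care is the cancellation in the forward direction: passing from $\sfi^-_1=\sfi^-_2$ and $\sfi^-_1+\sfi^0_1=\sfi^-_2+\sfi^0_2$ to the separate equality $\sfi^0_1=\sfi^0_2$ is legitimate only because all of the inertia numbers involved are finite, which is precisely what the hypothesis $\sfi^-_1+\sfi^0_1<\infty$ secures.
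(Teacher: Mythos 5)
Your proposal is correct and follows exactly the route the paper intends: the paper gives no written proof for Corollary~\ref{cor6} beyond the remark that it is obtained by combining Theorem~\ref{finth2}, Theorem~\ref{antinew2}, and Proposition~\ref{toegevoegd2}, which is precisely your assembly. Your attention to the finiteness needed for the cancellation $\sfi^0_1=\sfi^0_2$ is well placed (note that finiteness of $\sfi^-_2+\sfi^0_2$ comes from the established equality $\sfi^-_1+\sfi^0_1=\sfi^-_2+\sfi^0_2$, or alternatively from Proposition~\ref{ineqprop2}(ii), rather than directly from the hypothesis).
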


\subsection{An antitonicity theorem for Moore-Penrose inverses}

The so-called Moore-Penrose inverse $H^+$
of a selfadjoint operator $H$ in a Hilbert space is defined as
$$H^+:=PH^{-1}P,$$
where $H^{-1}$ is the inverse of $H$ (in the sense of relations)
and $P$ denotes the orthogonal projection onto $\cran H$ in $\sH$.
It follows that
\begin{equation}\label{MPh}
 H^+=(H^{-1})_s\hoplus (\ker H\times\{0\})
\end{equation}
holds. Note that the assumption $\mul H=\{0\}$ implies
$\ker (H^{-1})_s=\ker H^{-1}=\{0\}$
and hence $\ker H^+=\ker H$ and $\sfi(H^+)=\sfi(H)$ hold.

The following theorem is the infinite-dimensional version of
Corollary \ref{mp} from the introduction.

\begin{theorem}\label{th4}
Let $H_1$ and $H_2$ be selfadjoint operators in a separable
Hilbert space $\sH$ which are bounded from below. Let
$\sfi(H_j)=\{\sfi^+_j,\sfi^-_j,\sfi^0_j,\sfi^\infty_j\}$ be the inertia of
$H_j$, $j=1,2$, and assume that $\sfi^-_1+\sfi^0_1<\infty$ and that $H_1 \leq
H_2$. Then
\[
 H_2^+ \leq H_1^+ \quad \textrm{if and only if} \quad
 \ker H_1=\ker H_2 \text{ and } \,\,\sfi(H_1) = \sfi(H_2).
 \]
\end{theorem}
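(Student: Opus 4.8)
The plan is to relate the ordering of Moore--Penrose inverses to the ordering of ordinary inverses (as relations), for which Theorem~\ref{antinew2} is already available. Since each $H_j$ is an operator, $\mul H_j=\{0\}$, $\sfi^\infty_j=0$, and $\sfi(H_j^+)=\sfi(H_j)$. Writing $\cN_j:=\ker H_j=\ker H_j^+$ and combining \eqref{MPh} with $H_j^{-1}=(H_j^{-1})_s\hoplus(\{0\}\times\cN_j)$, one sees that $H_j^{-1}$ and $H_j^+$ share the same operator part $(H_j^{-1})_s$ in $\cN_j^\perp=\cran H_j$ and differ only on $\cN_j$, where $H_j^{-1}$ is purely multivalued while $H_j^+$ is the zero operator. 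The hypotheses $\sfi^-_1<\infty$ and $H_1\le H_2$ force $\sfi^-_2<\infty$ by Proposition~\ref{ineqprop2}(ii); since each $H_j$ is bounded from below its finitely many negative eigenvalues are bounded away from $0$, so $(H_j^{-1})_s$, and hence $H_j^+$, are bounded from below and all the orderings below are meaningful.

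The crucial point is that \emph{as soon as} $\cN_1=\cN_2=:\cN$, the two orderings coincide:
\[
H_2^{-1}\le H_1^{-1}\quad\Longleftrightarrow\quad H_2^+\le H_1^+.
\]
Indeed, for real $x$ below the relevant lower bounds one has $(H_j^{-1}-x)^{-1}=((H_j^{-1})_s-x)^{-1}\oplus 0_{\cN}$ and $(H_j^+-x)^{-1}=((H_j^{-1})_s-x)^{-1}\oplus(-1/x)I_{\cN}$. The $\cN$-blocks are the same for $j=1$ and $j=2$, so in the defining resolvent inequality \eqref{ord01h} they cancel, and each of the two orderings is equivalent to the single inequality $(H_2^{-1})_s\le(H_1^{-1})_s$ in $\cN^\perp$.

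For the implication $(\Leftarrow)$ I would proceed as follows. The assumption $\sfi(H_1)=\sfi(H_2)$ gives in particular $\sfi^-_1=\sfi^-_2<\infty$, so Theorem~\ref{antinew2} (applicable since the $H_j$ are bounded from below with $\sfi^-_1<\infty$ and $H_1\le H_2$) yields $H_2^{-1}\le H_1^{-1}$. As $\ker H_1=\ker H_2$ is assumed, the equivalence of the previous paragraph turns this into $H_2^+\le H_1^+$.

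For the converse $(\Rightarrow)$ I would first recover $\sfi^-_1=\sfi^-_2$: Proposition~\ref{ineqprop2}(ii) applied to $H_1\le H_2$ gives $\sfi^-_1\ge\sfi^-_2$, while the same proposition applied to $H_2^+\le H_1^+$, together with $\sfi^-(H_j^+)=\sfi^-_j$, gives $\sfi^-_2\ge\sfi^-_1$; hence $\sfi^-_1=\sfi^-_2<\infty$. I expect the main obstacle to be the kernel equality $\ker H_1=\ker H_2$: note that \eqref{incl} is useless here, since $H_1^+$ and $H_2^+$ are everywhere defined operators and so the inclusion it provides carries no information about their kernels. Instead I would apply Proposition~\ref{toegevoegd2}(iii) twice, using $\sfi^-_1=\sfi^-_2<\infty$: applied to $H_1\le H_2$ it gives $\ker H_2\subset\ker H_1$, and applied to $H_2^+\le H_1^+$ (where $\sfi^-(H_2^+)=\sfi^-(H_1^+)<\infty$) it gives $\ker H_1^+\subset\ker H_2^+$, that is $\ker H_1\subset\ker H_2$; together these force $\ker H_1=\ker H_2$ and hence $\sfi^0_1=\sfi^0_2$. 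Since $\sfi^-_j,\sfi^0_j,\sfi^\infty_j$ are now finite and pairwise equal, the dimension count \eqref{nn} yields $\sfi^+_1=\sfi^+_2$ as well, so $\sfi(H_1)=\sfi(H_2)$, completing the proof.
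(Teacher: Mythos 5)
Your proposal is correct and follows essentially the same route as the paper's proof: the backward direction rests on Theorem~\ref{antinew2} together with the observation (via \eqref{MPh}) that once $\ker H_1=\ker H_2$ the ordering of the Moore--Penrose inverses reduces to that of the operator parts $(H_j^{-1})_s$, and the forward direction uses Proposition~\ref{ineqprop2}, Proposition~\ref{toegevoegd2}, and \eqref{nn}. Your only deviations are cosmetic: you make the resolvent block computation explicit where the paper cites Proposition~\ref{oldh}, and you obtain $\ker H_1=\ker H_2$ by applying Proposition~\ref{toegevoegd2}(iii) to both inequalities rather than deriving $\sfi^0_1=\sfi^0_2$ first and invoking the final clause of that proposition.
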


\begin{proof}
$(\Rightarrow)$ Since $\ker H_j^+ = \ker H_j$, $j=1,2$, it follows
from Proposition~\ref{ineqprop2} and \eqref{coninnum1} that
$\sfi^-_1=\sfi^-_2<\infty$ and $\sfi^0_1=\sfi^0_2<\infty$ hold.
Since $\sfi^\infty_1=\sfi^\infty_2=0$ by assumption, \eqref{nn}
implies that $\sfi^+_1=\sfi^+_2$ and, therefore, $\sfi(H_1) =
\sfi(H_2)$. The assertion $\ker H_1=\ker H_2$ follows from
Proposition~\ref{toegevoegd2}.

$(\Leftarrow)$ The assumption $\sfi(H_1) = \sfi(H_2)$ together with
Theorem~\ref{antinew2} implies the inequalities $H_2^{-1}\leq H_1^{-1}$
and $(H_2^{-1})_s\leq (H_1^{-1})_s$. Therefore, as
$\ker H_1=\ker H_2$ it follows from \eqref{MPh} and
Proposition~\ref{oldh} that $H_2^+ \leq H_1^+$ holds.
\end{proof}

\end{document}